\newcommand{\R}{\mathbb{R}}
\newcommand{\C}{\mathbb{C}}
\newcommand{\PP}{\mathbb{P}}
\newcommand\pprec{\prec\mkern-5mu\prec}
\newcommand{\n}{\mathfrak{n}}
\newcommand{\eps}{\varepsilon}
\theoremstyle{plain}
\newtheorem{theorem}{Theorem}
\newtheorem{lemma}[theorem]{Lemma}
\newtheorem{prop}[theorem]{Proposition}
\theoremstyle{remark}
\newtheorem{remark}{Remark}
\numberwithin{equation}{section}
\begin{document}
\title{Exceptional characters and prime numbers in sparse sets}
\author{Jori Merikoski}
\address{Department of Mathematics and Statistics, University of Turku, FI-20014 University of Turku,
Finland}
\email{jori.e.merikoski@utu.fi}
\subjclass[2020]{11N32 primary, 11N36 secondary}

\begin{abstract} 
We develop a lower bound sieve for primes under the (unlikely) assumption of infinitely many exceptional characters. Compared with the illusory sieve due to Friedlander and Iwaniec which produces asymptotic formulas, we show that less arithmetic information is required to prove non-trivial lower bounds. As an application of our method, assuming the existence of infinitely many exceptional characters we show that there are infinitely many primes of the form $a^2+b^8$. 
\end{abstract}

\maketitle

\tableofcontents
\section{Introduction}
Understanding the distribution of prime numbers along polynomial sequences is one of the  basic questions in analytic number theory. For sparse polynomial sequences the problem is solved only in a handful of cases. The most notable are the Friedlander-Iwaniec theorem of primes of the form $a^2+b^4$ \cite{FI} and the result of Heath-Brown of primes of the form $a^3+2b^3$ \cite{hb}, which has been generalized to binary cubic forms by Heath-Brown and Moroz \cite{hbm} and to general incomplete norm forms by Maynard \cite{maynard}. Also, the result of Friedlander and Iwaniec has been extended by Heath-Brown and Li to primes of the form $a^2+p^4$ where $p$ is a prime \cite{hbli}.

Let $\pm D$ be a fundamental discriminant and let $\chi_D(n) = (\frac{D}{n})$ be the associated primitive real character. We say that $\chi_D$ is exceptional if $L(1,\chi_D)$ is very small, say,
\begin{align} \label{introlchibound}
L(1,\chi_D) = \sum_{n=1}^\infty \frac{\chi_D(n)}{n} \leq \log^{-100} D.
\end{align} 
It is conjectured that (for a exponent such as 100) there are at most finitely many exceptional characters, which is closely related to the conjecture that $L$-functions do not have zeros close to $s=1$ (so-called Siegel zeros).  However, assuming that there do exist infinitely many exceptional characters, it is possible to prove very strong results on distribution of prime numbers. For example, Heath-Brown has shown that the twin prime conjecture follows from such an assumption \cite{hbsiegel}, and Drappeau and Maynard have bounded sums of Kloosterman sums along primes \cite{dm}. The potential benefit of such results is that for an unconditional proof we are now allowed to assume the non-existence of exceptional characters, which in turn implies strong regularity in the distribution of primes in arithmetic progressions. Such a bifurcation in the proof has been successfully used to solve problems, for example, in the proof of Linnik's theorem \cite{linnik} and in many results in the theory of $L$-functions.

The state of the art method using exceptional characters is the so-called illusory sieve developed by Friedlander and Iwaniec \cite{fiap,fishort,fiillusory}, which is geared towards counting primes in sparse sets. Assuming the existence of infinitely many exceptional characters (with the exponent 100 in (\ref{introlchibound}) replaced by 200), Friedlander and Iwaniec \cite{fiillusory} proved that there are infinitely many prime numbers of the form $a^2+b^6$. For their method it is required to solve the corresponding ternary divisor problem, that is, show an asymptotic formula for $\sum \tau_3(a^2+b^6)$. This essentially comes down to showing that the sequence has an exponent of distribution $2/3-\varepsilon$. Friedlander and Iwaniec have solved this problem for $a^2+b^6$ in a form that is narrowly sufficient for the illusory sieve \cite{fidivisor}. 

Their method fails for sparser polynomial sequences such as $a^2+b^8$, which has an exponent of distribution $5/8-\varepsilon$. The purpose of this article is to develop a lower bound version of the illusory sieve. That is, instead of aiming for an asymptotic formula for primes of the form $a^2+b^8$, we just want to prove a lower bound of the correct order of magnitude for the number of primes. Morally speaking, we are able to show a non-trivial lower bound for primes in sequences with a level of distribution greater than $(1+\sqrt{e})/(1+2 \sqrt{e}) = 0.61634\dots$ (see Theorem \ref{generaltheorem}), so that the sequence $a^2+b^8$ qualifies.

We will state the general version of our lower bound sieve at the end of this article (Theorem \ref{generaltheorem}). For now we state the result for primes of the form $a^2+b^8$. For any $n \geq 0$ define
\begin{align*}
\kappa_{n} := \int_0^1 \sqrt{1-t^n} dt.
\end{align*}
\begin{theorem} \label{maintheorem}
 If there are infinitely many exceptional primitive characters $\chi$, then there are infinitely many prime numbers of the form $a^2+b^8$. More precisely, if $L(1,\chi_{D}) \leq \log^{-100} D$, then for $\exp(\log^{10} D) < x <\exp(\log^{16} D)$ we have
\begin{align*}
\sum_{\substack{a^2+b^8 \leq x \\ a,b >0}} \Lambda(a^2+b^8) \geq (0.189-o(1)) \cdot \frac{4}{\pi} \kappa_8 x^{5/8}
\end{align*} 
and 
\begin{align*}
\sum_{\substack{a^2+b^8 \leq x \\ a,b >0}} \Lambda(a^2+b^8) \leq (1+o(1)) \cdot \frac{4}{\pi} \kappa_8 x^{5/8}.
\end{align*}
\end{theorem}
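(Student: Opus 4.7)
The plan is to set up a lower-bound sieve for the sequence $\A = \{a^2+b^8 : a^2+b^8 \leq x,\ a,b>0\}$, which has cardinality of order $x^{5/8}$ and is supported on integers up to $x$. First I would record the basic arithmetic information about $\A$: for squarefree $d$ with $(d,2D)=1$, write $|\A_d| = g(d)|\A| + r_d$ for a multiplicative density $g(p) = \rho(p)/p$ with $\rho(p) = \#\{(a,b)\bmod p : a^2+b^8\equiv 0\}$, and I would need that $\sum_{d\leq D^*}|r_d|$ is small for $D^* = x^{5/8-\varepsilon}$. This level-of-distribution input for $a^2+b^8$ is the natural analogue of Friedlander--Iwaniec's divisor estimates for $a^2+b^4$ and $a^2+b^6$, and it is essentially the best one can hope for from Type I techniques on such a sparse sequence.

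Next, under the hypothesis $L(1,\chi_D)\leq \log^{-100} D$, I would invoke the illusory sieve machinery of Friedlander--Iwaniec. The idea is to replace $\Lambda$ by a truncated convolution $\Lambda^\flat$ built from $\chi_D$ and $\log$, whose summatory behaviour matches that of $\Lambda$ up to errors of size $(L(1,\chi_D)\log x)^c$, which the exceptional hypothesis makes negligible on the ranges $\exp(\log^{10} D) < x < \exp(\log^{16} D)$. Substituting $\Lambda^\flat$ for $\Lambda$ converts the prime count into linear and bilinear sums of the shape $\sum_{n\in\A}\chi_D(m) a_m b_k \mathbf{1}[n=mk]$, which can be evaluated purely from the Type I information above, provided that the support of one of the variables stays below $x^{5/8-\varepsilon}$.

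The essential new feature is to replace the asymptotic framework (which would demand the ternary divisor exponent $2/3$) by a Buchstab-iterated lower bound. I would decompose the weighted prime-counting sum on $\A$ via Buchstab's identity, keeping those sifted subsums whose combinatorial structure can be unfolded into the illusory calculation of the previous paragraph, and discarding the remainder using an upper-bound Selberg sieve applied at level $D^* = x^{5/8-\varepsilon}$. Tuning the sifting parameters against the Rosser--Iwaniec density functions $F(s)$ and $f(s)$ at the ratio $s = \log D^* / \log z$ fixes the admissible level-of-distribution threshold; balancing main term against discards then produces exactly the numerical constants stated: the ratio $(1+\sqrt{e})/(1+2\sqrt{e}) = 0.61634\ldots$ emerges as the break-even level above which the surviving mass is positive, while $0.189$ is the residual coefficient obtained by optimising over the Buchstab weights at the specific level $5/8$.

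The main obstacle will be the combinatorial optimisation of this Buchstab decomposition under two simultaneous constraints: the $\chi_D$-twist inherited from the illusory sieve, which restricts the ``useful'' configurations of sifting variables, and the sparse level-of-distribution constraint, which forbids bilinear evaluations beyond $x^{5/8}$. One has to verify that the discarded pieces really are controlled by Type I input alone (with no hidden Type II demand) and that the surviving main term remains strictly positive after the twist by $\chi_D$; pinning down the constant $0.189$ amounts to a careful variational calculation at exactly this point and is the most delicate step. The companion upper bound $(1+o(1))\tfrac{4}{\pi}\kappa_8 x^{5/8}$ is comparatively routine: a Selberg upper-bound sieve applied to $\A$ at level $x^{5/8-\varepsilon}$, combined with the illusory main-term calculation, yields it directly.
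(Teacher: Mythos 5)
You have the right overall shape — illusory sieve machinery plus a lower-bound decomposition with a linear-sieve discard, and you even derive the correct break-even constant $(1+\sqrt e)/(1+2\sqrt e)$ — but the engine of the proof is different from what you describe, and the difference is where the argument actually lives.

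The paper does \emph{not} run a Buchstab iteration against Rosser--Iwaniec densities $F(s),f(s)$, and the upper bound is not a Selberg sieve. Instead, with $\lambda:=1\ast\chi$ and $\lambda':=\chi\ast\log$, one uses the exact identity $\lambda\ast\Lambda=\lambda'$ to write (after pre-sifting by $P(z)$ with $z=x^{1/(\log\log x)^2}$)
\[
\sum_{n\sim x}a_n\Lambda(n)\;\geq\;\underbrace{\sum_{n\sim x}a_n\lambda'(n)1_{(n,P(z))=1}}_{S_1}\;-\;\underbrace{\sum_{\substack{km\sim x\\k,m\geq z}}a_{km}\Lambda(k)\lambda(m)1_{(m,P(z))=1}}_{S_2},
\]
so the whole problem is to bound $S_2$. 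Splitting by the size of $k$ with $\gamma=1/24+\varepsilon$, the ranges $k>x^{1/3+\gamma}$ and $k\leq x^{1/3-2\gamma}$ are killed by the lacunarity of $\lambda$ (extracting $L(1,\chi)$ via Type~I input at level $x^{5/8-\varepsilon}$), exactly as in the asymptotic illusory sieve. The genuinely new step, which your sketch does not capture, is what happens in the middle range $x^{1/3-2\gamma}<k\leq x^{1/3+\gamma}$: because $(m,P(z))=1$, any \emph{composite} $m$ factors as $m_1m_2$ with $m_1,m_2\geq z$, $\lambda(m)=\lambda(m_1)\lambda(m_2)$, and one of the combined variables $km_i$ can be pushed past $x^{1/2}$, after which lacunarity again extracts $L(1,\chi)$. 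What survives is only the sum $S_{222}$ with $m=p$ prime; there lacunarity gives nothing, so one applies the linear sieve \emph{upper bound} (Lemma~\ref{linearlemma}, with only $F(1)=2e^{\gamma_1}$, no iteration) in the $p$-variable to show $S_{222}\leq(D(\gamma)+o(1))\sum_n b_n\Lambda(n)$ with $D(\gamma)=2\log\frac{1+3\gamma}{1-6\gamma}$. At $\gamma=1/24$ this gives $D<0.811$ and hence the constant $0.189=1-0.811$. The constant is not a variational optimum over Buchstab weights; it is this single explicit logarithm. The upper bound in the theorem then comes for free: $S_2\geq 0$, so $\sum a_n\Lambda(n)\leq S_1$, and $S_1$ is evaluated by the same Type~I machinery needing only level $x^{1/2+\varepsilon}$, not a separate Selberg sieve.

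Concretely, the gap in your proposal is that ``discard the remainder with an upper-bound sieve at level $x^{5/8-\varepsilon}$'' cannot give a positive lower bound by itself: sieving at level $x^{5/8-\varepsilon}$ down to $z$ would lose a constant much larger than $1$. What makes the paper work is the observation that the only part of the error $S_2$ that resists the $L(1,\chi)$-extraction is the prime-$m$ piece in the narrow window around $x^{2/3}$, and that piece \emph{is} genuinely small (of size $O(\gamma)$ relative to the main term) because $k$ is constrained to an interval of logarithmic length $3\gamma$. You would need to locate this prime-$m$ bottleneck explicitly to make a Buchstab-style argument close; as written, your sketch does not isolate it and so gives no mechanism for obtaining a constant strictly below $1$.
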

\begin{remark}  Note that $\kappa_2 = \pi/4$, so that the coefficient is in fact $\kappa_8/\kappa_2,$ and $\frac{4}{\pi} \kappa_8 x^{5/8}$ is the expected main term. It turns out that the upper bound result is much easier and for this having an exponent of distribution 1/2 is sufficient.
\end{remark}

\subsection{Sketch of the argument} \label{sketchsection}
We present here a non-rigorous sketch of the proof of the lower bound in Theorem \ref{maintheorem}.  Let
\begin{align*}
a_n:= 1_{(n,D)=1} \sum_{\substack{
n=a^2+b^8 \\ (a,b)=1 \\  a,b >0}} 1,
\end{align*}
so that our goal is to estimate $\sum_{n \sim x} a_n \Lambda(n)$.

Let $\chi= \chi_D$. Similarly as in \cite{fiillusory}, we define the Dirichlet convolutions
\begin{align*}
\lambda:=1 \ast \chi \quad  \quad \text{and} \quad \quad \lambda' := \chi \ast \log,
\end{align*}
so that 
\begin{align} \label{lambdaformula}
\lambda \ast \Lambda = (1\ast \chi) \ast (\mu \ast \log) = (\chi \ast \log) \ast (1\ast \mu)=\lambda' .
\end{align}
 Note that $\lambda(n) \geq 0$ and $\lambda'(n) \geq \Lambda(n) \geq 0$ (by  using $\lambda'= \lambda \ast \Lambda$). 

The basic idea in arguments using the exceptional characters is as follows. Since
\begin{align*}
L(1,\chi)^{-1} = \sum_{n} \mu(n) \chi(n) / n = \prod_p \bigg( 1-\frac{\chi(p)}{p}\bigg)
\end{align*} is large, we expect that $\chi(p) = \mu(p)$ for most primes (in a range depending on $D$), so that heuristically we have $\chi \approx \mu$ and $ \lambda' \approx \Lambda$. Hence, we expect that
\begin{align} \label{heuristic1}
\sum_{n \sim x} a_n \Lambda(n) \approx \sum_{n \sim x} a_n \lambda'(n).
\end{align}
Since the modulus of $\chi$ is small, morally $\lambda'(n)$ is of same complexity as the divisor function $\tau(n)$, so that we have replaced the original sum by a much simpler sum.

Making the approximation (\ref{heuristic1}) rigorous is the difficult part of the argument, especially for sparse sequences $a_n$. Friedlander and Iwaniec succeeded in this under the assumption that the exponent of distribution is almost $2/3$, which was sufficient to handle primes in the sequence $a^2+b^6$. 
In our application $a_n$ has the exponent of distribution $5/8-\varepsilon$. This results in an additional error term compared to \cite{fiillusory}, but we are able to show that the contribution from this is smaller (but of the  same order) as the main term.
 
To bound the error term in (\ref{heuristic1}), using $\lambda'= \lambda \ast \Lambda$ we see that
\begin{align*}
\lambda'(n) - \Lambda(n) = \sum_{\substack{n=km \\ m > 1}} \Lambda(k) \lambda(m).
\end{align*}
Let $z=x^\varepsilon$ (in the proof we choose a slightly smaller $z$ for technical reasons). Then
\begin{align*}
\sum_{n \sim x} a_n \Lambda(n) &\geq \sum_{n \sim x} a_n \Lambda(n) 1_{(n,P(z)) =1}  \\
&= \sum_{n \sim x} a_n \lambda'(n) 1_{(n,P(z)) =1} -  \sum_{\substack{km \sim x \\ k,m \geq z}} a_{km} \Lambda(k)\lambda(m)  1_{(km,P(z))=1}  \\
&=: S_1 - S_2.
\end{align*}
Note that by removing the small prime factors we have guaranteed that $m \geq z$ in the second sum, so that we expect $\lambda(m) \approx (1\ast \mu)(m) = 0$ for almost all $m$ in $S_2$.  Thus, we expect that $S_1$ gives us the main term and that $S_2=o(S_1)$.

\begin{remark} The above decomposition has a close resemblance to the recent work of Granville  \cite{granville} using the identity
\[
\Lambda(n) 1_{(n,P(z)) }= 1_{(n,P(z))}\log n - \sum_{\substack{n=\ell m \\ (\ell m ,P(z))=1 \\ \ell, m \geq z}} \Lambda(\ell).
\]
\end{remark}

For the main term $S_1$ we can handle the condition $(n,P(z)) =1 $ by the fundamental lemma of the sieve, so we ignore this detail for the moment. Thus, we have to evaluate
\begin{align*}
\sum_{n \sim x} a_n \lambda'(n) = \sum_{mn \sim x} a_{mn} \chi(m) \log n.
\end{align*}
We have $m \geq x^{1/2}$ or $n \geq x^{1/2}$, so that we are able to compute $S_1$ provided that our sequence $a_n$ has a level of distribution $x^{1/2}$. This is because the modulus of $\chi$ is $x^{o(1)}$, so that $\chi$ is essentially of the same complexity as the constant function $1$. We find that $S_1$ gives the expected main term, so that we need to bound the error term $S_2$.

Similarly as in the argument in \cite{fiillusory}, the range $x^{2/3}$ plays a special role. With this in mind, we define $\gamma = 1/24+ \varepsilon$ so that $2/3-\gamma= 5/8-\varepsilon$ is the exponent of distribution.  We split $S_2$ into three parts depending on the size of $k$
\begin{align*}
S_2 &= \sum_{\substack{km \sim x \\k > x^{1/3+\gamma} \\ m \geq z}} a_{km} \Lambda(k) \lambda(m)  1_{(km,P(z))=1} + \sum_{\substack{km \sim x \\  x^{1/3-2\gamma} < k \leq x^{1/3+\gamma} \\ m \geq z}} a_{km} \Lambda(k) \lambda(m)  1_{(km,P(z))=1} \\
& \hspace{150pt} +\sum_{\substack{km \sim x \\ z \leq k \leq  x^{1/3-2\gamma} \\ m \geq z}} a_{km} \Lambda(k) \lambda(m)  1_{(km,P(z))=1} \\
&=: S_{21} +S_{22} + S_{23}.
\end{align*}

By similar arguments as in \cite{fiillusory}, we are able use the lacunarity of $\lambda(m)$ to bound the terms $S_{21}$ and $S_{23}$ suitably in terms of $L(1,\chi)$, using the fact that the exponent of the distribution is $2/3-\gamma$. That is, for $S_{21}$ we write 
\[
S_{21} \leq (\log x )\sum_{\substack{km \sim x \\k > x^{1/3+\gamma} \\ m \geq z}} a_{km}  \lambda(m)  1_{(m,P(z))=1},
\]
and for $S_{23}$ we drop $1_{(m,P(z))=1}$ by positivity and write
\[
\lambda(m)=\sum_{m=cd} \chi(d),
\] 
where $c$ or $d$  is $> x^{1/3+\gamma}$. In all cases we get a variable $> x^{1/3+\gamma}$, so that these can be evaluated as Type I sums. This gives
\begin{align*}
S_{21} + S_{23} \ll_C x^{5/8}( \log^{-C}x + L(1,\chi) \log ^5 x ),
\end{align*}
which is sufficient by the assumption that $\chi$ is an exceptional character.

The novel part in our argument is the treatment of the middle range 
\begin{align*}
S_{22} = \sum_{\substack{km \sim x \\  x^{1/3-2\gamma} < k \leq x^{1/3+\gamma} \\ m \geq z}} a_{km} \Lambda(k) \lambda(m)  1_{(km,P(z))=1}.
\end{align*}
Note that also in \cite{fiillusory} a narrow range near $x^{2/3}$ has to be discarded, but the argument there requires $\gamma= o (1).$ Thanks to the restriction $(m,P(z))=1$, it turns out that we are able to handle all parts of $S_{22}$ except when $m$ is a prime number. To see this, if $m$ is not a prime, then $m=m_1m_2$ for some $m_1,m_2 \geq z$, and we essentially get (recall that $\lambda(m) \geq 0$)
\begin{align*}
\sum_{\substack{km \sim x \\  x^{1/3-2\gamma} < k \leq x^{1/3+\gamma} \\ m \notin \PP}} a_{km} \Lambda(k) \lambda(m)  1_{(m,P(z))=1} \leq \sum_{\substack{km_1 m_2 \leq x \\  x^{1/3-2\gamma} < k \leq x^{1/3+\gamma} \\ m_1,m_2 \geq z}} a_{km} \Lambda(k) \lambda(m_1) \lambda(m_2) 1_{(m_1 m_2,P(z))=1},
\end{align*}
since $\lambda$ is multiplicative and the part where $(m_1,m_2) > 1$ gives a negligible contribution. For the part  $k m_1 > x^{1/2}$ we use $\lambda(m_1) \leq \tau(m_1) \ll 2^{1/\epsilon}$ and combine variables $\ell = km_1$ to get a bound
\begin{align*}
 \ll \sum_{z \leq m_2 \ll x^{1/2}} \lambda(m_2) \sum_{ \ell \sim x/m_2} a_{\ell m_2},
\end{align*}
which can be bounded suitably in terms of $L(1,\chi)$ by a similar argument as with $S_{21}$. The part $km_1 \leq x^{1/2}$ is handled similarly, using  $\lambda(m_2) \leq \tau(m_2) \ll 2^{1/\epsilon}$ and extracting $L(1,\chi)$ from $\lambda(m_1)$ this time. Thus, the contribution from the composite $m$ is negligible.

Hence, it remains to bound
\begin{align*}
S_{222}:=\sum_{\substack{kp \sim x \\  x^{1/3-2\gamma} < k \leq x^{1/3+\gamma} }} a_{km} \Lambda(k) \lambda(p) = \sum_{\substack{kp \sim x \\  x^{1/3-2\gamma} < k \leq x^{1/3+\gamma} }} a_{km} \Lambda(k) (1+\chi(p)).
\end{align*}
Here we are not able to make use of the lacunarity of $\lambda(p)$. However, since $S_{222}$ counts products of two primes of medium sizes, we immediately see that $S_{222}$ should be smaller than the main term by a factor of $O(\gamma)$, so that at least for small enough $\gamma$ we get a non-trivial lower bound. We use the linear sieve upper bound to the variable $p$ to make this upper bound rigorous and precise, which leads to the constant 0.189 in Theorem \ref{maintheorem}.

The paper is structured as follows. In Section \ref{sievesection} we carry out the sieve argument and the proof of Theorem \ref{maintheorem} assuming a sufficient exponent of distribution for $a_n$ (Propositions \ref{typei1prop} and \ref{typei2prop}). In Section \ref{typeisection} we prove Propositions \ref{typei1prop} and \ref{typei2prop} by generalizing the arguments in \cite{fidivisor}. Lastly, in Section \ref{generalsection} we state a general version of the sieve and explain how the method could be improved assuming further arithmetic information.

\begin{remark}
Our sieve argument is inspired by Harman's sieve method \cite{harman}, although the exact details in this setting turn out to be quite different. The moral of the story is that all sieve arguments should be continuous with respect to the quality of the arithmetic information, which in this case is measured solely by the exponent of distribution. That is, even though we fail to obtain an asymptotic formula after some point (in this case 2/3), we still expect to be able to produce lower and upper bounds of the correct order of magnitude with slightly less arithmetic information.
\end{remark}
\subsection{Notations}
For functions $f$ and $g$ with $g \geq 0$, we write $f \ll g$ or $f= O(g)$ if there is a constant $C$ such that $|f|  \leq C g.$ The notation $f \asymp g$ means $g \ll f \ll g.$ The constant may depend on some parameter, which is indicated in the subscript (e.g. $\ll_{\epsilon}$).
We write $f=o(g)$ if $f/g \to 0$ for large values of the variable. For summation variables we write $n \sim N$ meaning $N<n \leq 2N$. 

For two functions $f$ and $g$ with $g \geq 0$, it is convenient for us to denote  $f(N) \pprec g(N)$ if $f(N) \ll g(N) \log^{O(1)} N$.
For parameters such as $\varepsilon$ we write  $f(N) \pprec_\varepsilon g(N)$ to mean $f(N) \ll_\varepsilon g(N) \log^{O_\varepsilon(1)} N.$ A typical bound we use is $ S(N)=\sum_{n \leq N} \tau_k(n)^K \pprec_{k,K} N$, where $\tau_k$ is the $k$-fold divisor function. We say that an arithmetic function $f$ is divisor bounded if $|f(n)| \,\pprec \tau(n)^K$ for some $K$.

For a statement $E$ we denote by $1_E$ the characteristic function of that statement. For a set $A$ we use $1_A$ to denote the characteristic function of $A.$ 

We let $e(x):= e^{2 \pi i x}$ and $e_q(x):= e(x/q)$ for any integer $q \geq 1$. We denote
\begin{align*}
\lambda:=1 \ast \chi \quad  \quad \text{and} \quad \quad \lambda' := \chi \ast \log.
\end{align*}

\subsection{Acknowledgements}
I am grateful to my supervisor Kaisa Matom\"aki for helpful comments and encouragement.  I also wish to thank Kyle Pratt for comments on an early version of this manuscript. During the work the author was funded by UTUGS Graduate School. Part of the article was also completed while I was working on projects funded by the Academy of Finland (project no. 319180) and the Emil Aaltonen foundation.

\section{The sieve argument} \label{sievesection}
In this section state the arithmetic information (Propositions \ref{typei1prop} and \ref{typei2prop}) and assuming this we give the proof of Theorem \ref{maintheorem} using a sieve argument with exceptional characters. We postpone the proof of Propositions \ref{typei1prop} and \ref{typei2prop} to Section \ref{typeisection}. From here on we let $q$ denote the modulus of the exceptional character $\chi= \chi_{q}$, to avoid conflating it with the level of distribution which we will denote by $D$ (this also agrees with the notations in \cite[Section 14]{fiillusory}). Throughout this section we denote
\begin{align*}
a_n:= 1_{(n,q)=1} \sum_{\substack{
n=a^2+b^8 \\ (a,b)=1 \\  a,b >0}} 1,
\end{align*}
and
\begin{align*}
b_n := 1_{(n,q)=1}\frac{1}{4}   \sum_{\substack{
n=a^2+b^2 \\ (a,b)=1 \\ a,b >0}} b^{-3/4}.
\end{align*}
In $b_n$ we are counting the representations $a^2+b^2$ weighted with the probability that $b$ is a perfect fourth power so that heuristically we expect $\sum_{n\sim x} a_n \Lambda(n) =(1+o(1)) \sum_{n\sim x} b_n \Lambda(n)$. Differing from \cite{fiillusory}, it is convenient for us to write certain parts of the argument as a comparison between $a_n$ and $b_n$. This is inspired by Harman's sieve method \cite{harman}, where the idea is to apply the same combinatorial decompositions to the sums over $a_n$ and $b_n$ and then compare, using positivity to drop certain terms entirely.

We let $g(d)$ denote the multiplicative function defined by
\begin{align} \label{gdefinition}
g(p^k) =1_{p \, \nmid \, q}\frac{\varrho(p^k)}{p^k} \bigg(1+ \frac{1}{p}\bigg)^{-1},
\end{align}
where $\varrho(d)$ denotes the number of solutions to $\nu^2+1 \equiv 0 \,(d)$. Note that for all primes $p$ we have $\varrho(p)=1+\chi_4(p)$.
\subsection{Preliminaries}
We have collected here some standard estimates that will be needed in the sieve argument.
\begin{lemma}\label{gaussprimeslemma}
Let
\begin{align*}
G_q := \prod_{p|q} \bigg(1-\frac{\varrho(p)}{p} \bigg)^{-1}.
\end{align*}
Then
\begin{align*}
\prod_{p \leq z} (1-g(p)) = (1+o(1)) \frac{G_q \zeta(2)}{L(1,\chi_4)} \prod_{p \leq z} (1-1/p)= (1+o(1)) \frac{G_q\zeta(2)}{L(1,\chi_4) e^\gamma \log z}
\end{align*}
and
\begin{align*}
\sum_{n \leq x} \Lambda(n) b_n &= (1+o(1)) \frac{G_q\zeta(2)}{L(1,\chi_4)}\sum_{n \leq x}  b_n = (1+o(1))\frac{4}{\pi} \kappa_8 x^{5/8} \\
&= (1+o(1)) e^{\gamma_1} \log z  \prod_{p \leq z} (1-g(p))\sum_{n \leq x}  b_n,
\end{align*}
where $\gamma_1=0.577\dots$ denotes the Euler-Mascheroni constant.
\end{lemma}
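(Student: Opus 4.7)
The lemma has three assertions: an Euler-product identity for $\prod_{p\leq z}(1-g(p))$ and asymptotics for $\sum_{n\leq x}b_n$ and $\sum_{n\leq x}\Lambda(n) b_n$ that must agree up to the displayed constant ratio. My plan is to prove the product identity by a local Euler-factor computation, compute the two sums by a lattice-point count and Gaussian-prime equidistribution respectively, and then chain the remaining equalities via Mertens' theorem.

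For the product identity, I first note that since $g(p)=0$ for $p\mid q$ and $g(p) = (1+\chi_4(p))/(p+1)$ otherwise, a short calculation gives
\[
\frac{1-g(p)}{1-1/p} = \begin{cases}\dfrac{1-\chi_4(p)/p}{1-1/p^2} & \text{if } p\nmid q, \\[3pt] \dfrac{p}{p-1} & \text{if } p\mid q. \end{cases}
\]
Taking the product over $p\leq z$ and letting the Euler factors at primes not dividing $q$ converge, the identities $\prod_p(1-\chi_4(p)/p)^{-1} = L(1,\chi_4)$ and $\prod_p(1-1/p^2)^{-1}=\zeta(2)$ produce the main constants, while the finitely many local factors at primes of $q$ reassemble into $G_q$. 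The second displayed form is then immediate from Mertens' theorem $\prod_{p\leq z}(1-1/p) \sim e^{-\gamma}/\log z$.

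For $\sum_{n\leq x} b_n$, I expand the definition, detect both coprimality conditions $(a,b)=1$ and $(a^2+b^2, q)=1$ by M\"obius inversion, swap summation, and approximate the inner sums by integrals. The arithmetic local densities assemble (by the same Euler-product manipulation as in the previous paragraph) into $L(1,\chi_4)/(G_q\zeta(2))$, and the geometric integral is evaluated by $b = \sqrt{x}\,u$ followed by $u = t^4$:
\[
\iint_{\substack{a,b>0\\ a^2+b^2\leq x}} b^{-3/4}\, da\, db = x^{5/8}\int_0^1 u^{-3/4}\sqrt{1-u^2}\,du = 4\kappa_8\, x^{5/8}.
\]
For $\sum_{n\leq x}\Lambda(n) b_n$, I reduce to $n=p$ prime (prime powers contribute $O(x^{1/2}\log x)$); only $p\equiv 1\pmod 4$ contributes, with a unique representation $p=a^2+b^2$, $0<a<b$, for which coprimality to $q$ and $(a,b)=1$ are automatic. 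The prime number theorem for Gaussian primes gives, for continuous symmetric $f$,
\[
\sum_{\substack{0<a<b\\ a^2+b^2\leq x\\ a^2+b^2\text{ prime}}}\log(a^2+b^2)\,f(a,b) \sim \frac{2}{\pi}\iint_{\substack{a,b>0\\ a^2+b^2\leq x}} f(a,b)\, da\, db,
\]
and taking $f(a,b) = a^{-3/4}+b^{-3/4}$, using symmetry of the region, and inserting the factor $\tfrac14$ from the definition of $b_p$ yields $\sum \Lambda(n) b_n \sim \frac{4}{\pi}\kappa_8\, x^{5/8}$.

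The ratio of the two asymptotics is $G_q\zeta(2)/L(1,\chi_4)$, which matches the first displayed identity $\sum \Lambda(n) b_n = (1+o(1)) \frac{G_q\zeta(2)}{L(1,\chi_4)}\sum b_n$; the final form $e^{\gamma_1}\log z\, \prod_{p\leq z}(1-g(p))\sum b_n$ is then a rewriting using the product identity from the second paragraph. The main obstacle is the prime number theorem for Gaussian primes with the constant $4/\pi$, a standard consequence of the nonvanishing of Hecke $L$-functions on $\mathrm{Re}(s)=1$; everything else is local Euler-factor bookkeeping, M\"obius inversion for coprimality, and the substitution $u=t^4$ that relates the circular integral to $\kappa_8$.
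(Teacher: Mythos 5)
Your proposal is essentially the same as the paper's: an Euler-factor computation, the substitution $u=t^4$ turning the geometric integral into $4\kappa_8 x^{5/8}$, and the prime number theorem for Gaussian primes (which the paper implements as a sum over thin boxes of side $\asymp\sqrt{x}/\log^{10}x$ on which $b^{-3/4}$ is essentially constant, rather than your single weighted-equidistribution statement).

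Two small remarks. You assert that the local factors at $p\mid q$ ``reassemble into $G_q$'' without verifying it; carrying out your own local computation gives $\prod_{p\mid q}\frac{1+1/p}{1-\chi_4(p)/p}=\prod_{p\mid q}\left(1-\varrho(p)/(p+1)\right)^{-1}$, which is not $G_q=\prod_{p\mid q}\left(1-\varrho(p)/p\right)^{-1}$ as the lemma defines it (for $q=5$ one gets $3/2$ versus $5/3$). This is evidently a slip in the paper's definition of $G_q$ --- the denominator should be $p+1$ --- and it is harmless since $G_q$ cancels from the identity $\sum\Lambda(n)b_n=(1+o(1))\,e^{\gamma_1}\log z\prod_{p\leq z}(1-g(p))\sum_{n} b_n$ that is used downstream, but a careful proof should notice the discrepancy rather than wave at it. Second, your equidistribution statement is for continuous symmetric $f$, yet $f(a,b)=a^{-3/4}+b^{-3/4}$ is singular near the axes; one still needs to check that the contribution from, say, $\min(a,b)\le\sqrt{x}/\log^{10}x$ is $o(x^{5/8})$, which the paper does by discarding the edge boxes before applying the Gaussian prime number theorem.
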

\begin{proof}
The first asymptotic follows from
\begin{align*}
\prod_p \frac{1-g(p)}{1-1/p} = G_q \prod_p( 1- \chi_4(p)/p)(1-p^{-2})^{-1} = \frac{ G_q \zeta(2)}{L(1,\chi_4)} 
\end{align*}
and Merten's theorem. To get the second part we apply Prime number theorem for Gaussian primes $a+ib$, splitting the sum into boxes $(a,b) \in [z_1,z_1+x/\log^{10} x] \times [z_2,z_2+x/\log^{10} x]$ so that $b^{-3/4} =(1+o(1)) z_2^{-3/4}$, noting that the contribution from boxes with $z_1 \leq x/\log^{10} x$ or $z_2 \leq x/\log^{10} x$ is trivially $\ll x^{5/8}/\log x$ (by writing $\Lambda(n) \leq \log x$). Here the condition $(a^2+b^2,q)=1$ implicit in $b_n$ translates into the multiplicative factor $G_q$ (by an expansion using the M\"obius function). For the last asymptotic note that by the change of variables $t=u^{1/4}$
\begin{align*}
\frac{1}{4}\int_0^1  u^{-3/4} \sqrt{1-u^2} dt = \int_0^1 \sqrt{1-t^8} dt = \kappa_8
\end{align*}
and $L(1,\chi_4) = \pi/4$.
\end{proof}

We also require the following basic estimate (see \cite[Lemma 1]{fisieve}, for instance).
\begin{lemma} \label{divisorlemma} For every integer $n$ and every $k \geq 2$ there exists some $d|n$ such that $d \leq n^{1/k}$ and
\begin{align*}
\tau(n) \leq 2^k \tau(d)^k.
\end{align*}
\end{lemma}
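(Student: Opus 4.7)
Writing the factorization $n = \prod_i p_i^{a_i}$, the plan is to build a divisor $d = \prod_i p_i^{c_i}$ with $0 \le c_i \le a_i$ so that both $d \le n^{1/k}$ and $\tau(n) \le 2^k \tau(d)^k$ hold. The natural first try is $c_i = \lfloor a_i/k \rfloor$, which automatically makes $d^k \mid n$ and hence $d \le n^{1/k}$. For each $i$ the per-prime ratio is
\[
\frac{a_i+1}{(c_i+1)^k} \;\le\; \frac{k(c_i+1)}{(c_i+1)^k} \;=\; \frac{k}{(c_i+1)^{k-1}} \;\le\; \frac{k}{2^{k-1}} \;\le\; 1
\]
whenever $c_i \ge 1$, i.e.\ whenever $a_i \ge k$. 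So high-exponent primes are harmless, and the entire difficulty is concentrated in primes with $a_i < k$, for which $c_i = 0$ and the ratio jumps up to $a_i + 1 \le k$.

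To handle these, I would modify the construction greedily. Consider first the squarefree case $n = p_1 \cdots p_r$ with $p_1 < \cdots < p_r$: take $d = p_1 \cdots p_s$ for the largest $s$ with $p_1 \cdots p_s \le n^{1/k}$. Since the $\log p_i$ are in increasing order, the sum of the smallest $s+1$ of them is at most $(s+1)/r$ of the total, so
\[
\frac{s+1}{r} \log n \;\ge\; \sum_{i \le s+1} \log p_i \;>\; \frac{\log n}{k}\,,
\]
forcing $s \ge r/k - 1$; hence $\tau(n)/\tau(d)^k = 2^{r - ks} \le 2^k$. For general $n$ one applies the same idea to the small-exponent primes: sort them by $\log p_i^{a_i}$ and absorb each full prime power $p_i^{a_i}$ into $d$ (upgrading $c_i$ from $0$ to $a_i$) as long as the running product stays below the residual budget $n^{1/k}/\prod_i p_i^{\lfloor a_i/k \rfloor}$. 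An absorbed small-exponent prime drops its per-prime ratio to $1/(a_i+1)^{k-1} \le 1$, and the sorted-sum argument applied to the sub-lattice of small-exponent primes (with the elementary bound $a_i + 1 \le 2^{a_i}$) bounds the unabsorbed residual contribution by $2^k$.

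The main obstacle I anticipate is the bookkeeping in combining the two strategies into a single $d$. The $n^{1/k}$ budget has to be split between the high-exponent primes that absorb $\prod p_i^{\lfloor a_i/k \rfloor}$ and the greedy absorption of small-exponent prime powers; verifying that the residual budget is always large enough to absorb the "right" fraction of small-exponent primes --- sharply enough to land on the clean constant $2^k$ rather than the looser $k^k$ one would get from crudely bounding the number of unabsorbed primes by $k$ and each per-prime loss by $k$ --- is the only delicate point, and should follow from re-running the sorted-sum inequality on the sub-lattice of not-yet-absorbed primes.
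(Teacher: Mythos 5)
Your squarefree-case argument is correct, and setting $c_i=\lfloor a_i/k\rfloor$ for primes with $a_i\ge k$ is sound. But the construction for general $n$ has a structural gap, not merely a bookkeeping one: the all-or-nothing upgrade ($c_i=0$ or $c_i=a_i$) for the small-exponent primes cannot reach the constant $2^k$. Take $k\ge 3$ and $n=(p_1\cdots p_t)^{k-1}$ with $t\equiv k-1\pmod{k}$ and the $p_i$ nearly equal, so that every prime is small-exponent and $\tau(n)=k^t$. Your greedy absorbs $p_1^{k-1},\dots,p_s^{k-1}$ with $s$ maximal subject to $\prod_{i\le s}p_i^{k-1}\le n^{1/k}$, i.e.\ $\prod_{i\le s}p_i\le(p_1\cdots p_t)^{1/k}$; with nearly equal $p_i$ this forces $s=(t-k+1)/k$, whence $\tau(d)=k^s$ and $\tau(n)/\tau(d)^k=k^{t-ks}=k^{k-1}$, which exceeds $2^k$ for every $k\ge 3$ (already $9>8$ at $k=3$). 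The auxiliary bound $a_i+1\le 2^{a_i}$ cannot repair this: it would require $\sum_{i>s}a_i\le k$, whereas $\sum_{i>s}a_i=(k-1)(t-s)$ grows without bound with $t$.

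The missing idea is partial absorption. In the same example, the divisor $d'=p_1\cdots p_{s'}$ with all $c_i=1$ and $s'$ maximal subject to $d'\le n^{1/k}$ has $s'>(k-1)t/k-1$ by the sorted-sum inequality, and then $\tau(n)/\tau(d')^k< k^t/2^{(k-1)t-k}=2^k(k/2^{k-1})^t\le 2^k$; so a suitable divisor exists, but it is found only by allowing $0<c_i<a_i$. A correct argument must therefore select $c_i$ from the full range $\{0,1,\dots,a_i\}$, weighing the gain in $\tau(d)$ against the cost in $\log d$ more finely than a two-bucket scheme permits. Note also that the paper itself gives no proof and simply cites \cite[Lemma~1]{fisieve}; the argument there is organized quite differently from yours.
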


To bound the final error term we require the linear sieve upper bound for primes (apply \cite[Theorem 11.12]{odc} with $z=D$ and $s=1$, using $F(1)=2e^\gamma$). 
\begin{lemma}\label{linearlemma} \emph{\textbf{(Linear sieve upper bound for primes).}}
Let $(c_n)_{n \geq 1}$ be a sequence of non-negative real numbers. For some fixed $X_0$ depending only on the sequence $(c_n)_{n \geq 1}$, define $r_d$ for all square-free $d \geq 1$ by
\begin{align*}
\sum_{n \equiv 0 \, (d)} c_n = g_0(d) X_0 + r_d,
\end{align*}
where $g_0(d)$ is a multiplicative function, depending only on the sequence $(a_n)_{n \geq 1}$, satisfying $0 \leq g_0(p) < 1$ for all primes $p.$ Let $D\geq 2$ (the level of distribution). Suppose that there exists a constant $L >0$ that for any $2 \leq w < D$ we have
\begin{align*}
\prod_{w \leq p < D} (1-g_0(p))^{-1} \leq \frac{\log D}{\log w} \bigg(1+\frac{L}{\log w}\bigg).
\end{align*}
Then
\begin{align*}
\sum_{p} c_p  &\leq (1 + O(\log^{-1/6} D)) X_0 2e^{\gamma_1} \prod_{p\leq D} (1-g_0(p)) + \sum_{\substack{d \leq D \\ d \, \, \text{\emph{square free}}}} |r_d|.
\end{align*}
\end{lemma}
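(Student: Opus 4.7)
My approach is to reduce the prime sum to a one-dimensional sifted sum and apply the Rosser--Iwaniec linear sieve. Let $P(D) := \prod_{p \leq D} p$ and
\begin{align*}
S(\A; D) := \sum_{(n,\, P(D)) = 1} c_n.
\end{align*}
Every prime $p > D$ is coprime to $P(D)$, while primes $p \leq D$ contribute at most $X_0 \sum_{p \leq D} g_0(p) + \sum_{p \leq D}|r_p|$: the second summand is absorbed into the stated remainder, and the first is controlled via the one-dimensional hypothesis (which forces $\sum_{p \leq D} g_0(p) \ll \log\log D$) so that it fits comfortably within the $O(\log^{-1/6} D)$ slack around the main term. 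The substantive content is an upper bound for $S(\A; D)$.

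\textbf{Sifting step.} I would construct Rosser's upper-bound weights $\lambda^+(d)$, supported on squarefree $d \leq D$, with $|\lambda^+(d)| \leq 1$, $\lambda^+(1)=1$, and $\sum_{d \mid n}\lambda^+(d) \geq \mathbf{1}_{(n,P(D))=1}$ pointwise. Interchanging summations and invoking the decomposition $\sum_{n \equiv 0\,(d)} c_n = X_0 g_0(d) + r_d$ gives
\begin{align*}
S(\A; D) \;\leq\; X_0 \sum_{d \leq D} \lambda^+(d)\, g_0(d) \;+\; \sum_{\substack{d \leq D \\ d\,\text{squarefree}}}|r_d|,
\end{align*}
where the remainder already matches the one in the statement. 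The problem reduces to estimating the arithmetic quantity $M(D) := \sum_{d \leq D}\lambda^+(d)\,g_0(d)$.

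\textbf{Analytic core and main obstacle.} Write $V(D):=\prod_{p \leq D}(1-g_0(p))$. The heart of the proof is to show
\begin{align*}
M(D) \;\leq\; V(D)\, F(1) \bigl(1 + O((\log D)^{-1/6})\bigr),
\end{align*}
where $F$ is the standard upper linear-sieve function, characterised together with its lower counterpart $f$ by the delay-differential system $(sF(s))' = f(s-1)$, $(sf(s))' = F(s-1)$ with boundary conditions $sF(s) = 2e^{\gamma_1}$ on $[1,3]$ and $f(s)=0$ on $[0,2]$; in particular $F(1) = 2e^{\gamma_1}$. This analytic step is what I expect to be the main obstacle: one must iterate Buchstab's identity on $\lambda^+$ at a carefully chosen truncation parameter and match the result against the explicit solution of the delay-differential system, via the Rosser/Iwaniec adjoint formalism. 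The role of the one-dimensional density hypothesis on $g_0$ is precisely to certify the Mertens-type law that drives Buchstab's identity at dimension $1$, with the parameter $L$ being absorbed into the implicit constant in the error. All of this is carried out in detail in \cite[Chapter~11]{odc}, from which the stated bound then follows by combining the three displayed inequalities above.
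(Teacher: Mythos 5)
Your reduction to the Rosser--Iwaniec linear sieve at sieving level $z=D$, $s=1$ is essentially the same route as the paper, which simply cites \cite[Theorem 11.12]{odc}. The decomposition $\sum_{n\equiv 0\,(d)}c_n = g_0(d)X_0 + r_d$, the construction of $\lambda^+$, the reduction to $M(D)=\sum_{d\le D}\lambda^+(d)g_0(d)$, and the appeal to $F(1)=2e^{\gamma_1}$ are all correct in outline and match the standard treatment.

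However, your handling of the primes $p\le D$ contains a genuine gap. You bound $c_p \le g_0(p)X_0+|r_p|$ and hence $\sum_{p\le D}c_p \le X_0\sum_{p\le D}g_0(p)+\sum_{p\le D}|r_p|$, and then assert the first piece, of size $\asymp X_0\log\log D$, is absorbed into the $O((\log D)^{-1/6})$ slack around the main term. But the main term is $2e^{\gamma_1}X_0 V(D)$ with $V(D)=\prod_{p\le D}(1-g_0(p))\asymp (\log D)^{-1}$ (by the one-dimensional hypothesis), so it has size $\asymp X_0/\log D$ and the allowed slack is only $O\bigl(X_0(\log D)^{-7/6}\bigr)$. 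Thus $X_0\log\log D$ exceeds the entire main term by a factor $\asymp \log D\cdot\log\log D$; it is not negligible, let alone inside the slack. The source of the loss is that $g_0(p)X_0$ estimates the count of \emph{all} multiples of $p$, vastly overcounting $c_p$ alone. In the paper's actual application this never arises because $c_n=a_{kn}(1+\chi(n))$ is supported on $n\sim x/k$ with $x/k>D_k$, so there are no primes $p\le D$ with $c_p\neq 0$; any self-contained proof of the lemma either needs such a support hypothesis made explicit, or a finer argument for the small primes (for instance, the Rosser upper-bound weights already satisfy $\sum_{d\mid p}\lambda^+(d)=1$ for $D^{1/3}\le p<D$, so the sieve sum itself majorizes those primes; the range $p<D^{1/3}$ would still need separate treatment).
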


The following lemma gives a basic upper bound for smooth numbers (see \cite[Chapter III.5, Theorem 1]{Ten}, for instance). 
\begin{lemma} \label{smoothlemma} For any $2 \leq z \leq y$ we have
\begin{align*}
\sum_{\substack{n \sim y \\ P^+(n) < z}} 1 \, \ll \, y e^{-u/2},
\end{align*}
where $u:= \log y/\log z.$
\end{lemma}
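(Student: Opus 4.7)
The plan is to apply Rankin's trick. Since $n \sim y$ implies $n \leq 2y$, for any $\sigma \in (0,1)$ we have
\begin{align*}
\sum_{\substack{n \sim y \\ P^+(n) < z}} 1 \leq (2y)^\sigma \sum_{\substack{n \geq 1 \\ P^+(n) < z}} n^{-\sigma} = (2y)^\sigma \prod_{p < z} (1-p^{-\sigma})^{-1},
\end{align*}
the Euler product being the $z$-smooth truncation of $\zeta(\sigma)$. The goal is then to choose $\sigma$ slightly below $1$ so that the gain from $y^\sigma$ beats the cost coming from the Euler product.

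Taking logarithms, I would use $-\log(1-p^{-\sigma}) = p^{-\sigma} + O(p^{-2\sigma})$ to reduce to estimating $\sum_{p < z} p^{-\sigma}$ (the tail $\sum_{p} p^{-2\sigma}$ is $O(1)$ uniformly for $\sigma > 1/2+\delta$). Abel summation against the Chebyshev bound $\pi(t) \ll t/\log t$ yields
\begin{align*}
\sum_{p < z} p^{-\sigma} \ll \frac{z^{1-\sigma}}{(1-\sigma)\log z}
\end{align*}
uniformly for $\sigma$ bounded away from $1$. I would then specialize $\sigma = 1 - 1/(2\log z)$, which makes $z^{1-\sigma} = e^{1/2}$ and $(1-\sigma)\log z = 1/2$, so the right-hand side above is $O(1)$. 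Hence $\prod_{p < z}(1-p^{-\sigma})^{-1} \ll 1$.

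Substituting back, this choice of $\sigma$ gives
\begin{align*}
\sum_{\substack{n \sim y \\ P^+(n) < z}} 1 \ll y^{\,1 - 1/(2\log z)} = y e^{-u/2},
\end{align*}
which is the claimed bound. The only real subtlety is the uniformity of the Mertens-type estimate for $\sum_{p<z} p^{-\sigma}$ as $\sigma \to 1^-$ at the rate $1/\log z$; but this is immediate from Chebyshev's bound via partial summation, and is where all the work hides. I do not expect a genuine obstacle here: the result is a classical application of Rankin's method, and the form $e^{-u/2}$ (as opposed to the sharper de Bruijn–Dickman $u^{-u(1+o(1))}$) is exactly what falls out of this elementary optimization.
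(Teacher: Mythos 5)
The paper does not prove this lemma; it cites \cite[Chapter III.5, Theorem 1]{Ten} (Tenenbaum's book), so your task is to supply a proof that the paper simply points to. Your overall strategy (Rankin's trick) is the right one, but there is a genuine error in the estimate of the Euler product, and it is not cosmetic.

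You assert $\sum_{p<z} p^{-\sigma} \ll z^{1-\sigma}/\big((1-\sigma)\log z\big)$ ``uniformly for $\sigma$ bounded away from $1$'' and then immediately specialize $\sigma = 1-1/(2\log z)$, which tends to $1$ as $z\to\infty$ --- precisely the regime excluded by your own uniformity hypothesis. And indeed the bound fails there. With $\sigma = 1-1/(2\log z)$ one has $p^{-\sigma}=p^{-1}p^{1-\sigma}$ and $1\le p^{1-\sigma}\le e^{1/2}$ for $p\le z$, so Mertens gives
\begin{align*}
\sum_{p<z} p^{-\sigma} = \sum_{p<z}\frac1p + O\Big(\tfrac{1}{\log z}\sum_{p<z}\tfrac{\log p}{p}\Big) = \log\log z + O(1),
\end{align*}
not $O(1)$. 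Consequently $\prod_{p<z}(1-p^{-\sigma})^{-1}\asymp \log z$; more simply, since $\sigma\le 1$, each factor dominates $(1-p^{-1})^{-1}$, so the Euler product is always $\gg \log z$, never $O(1)$. Your Rankin bound therefore yields $y e^{-u/2}\log z$, which is off by a factor of $\log z$. This extra factor cannot be absorbed: for $z=y^{1/10}$ one has $e^{-u/2}=e^{-5}$ fixed while $\log z\to\infty$.

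To close the gap you need to do more than shift $\sigma$ by a constant multiple of $1/\log z$: that always costs you $\log z$ from the Euler product and only saves $e^{-cu}$, so it fails when $1\ll u\ll \log\log z$. The standard repairs are either (i) take $\sigma$ near the true saddle point $\sigma\approx 1-\log u/\log z$, which yields the much stronger $\Psi(y,z)\ll y\, u^{-u(1+o(1))}$ that swallows the $\log z$ when $u$ is large, together with a trivial bound $\Psi\le 2y\ll ye^{-u/2}$ when $u=O(1)$, and a separate treatment of the intermediate range; or (ii) invoke Buchstab/de Bruijn machinery in the range where $z$ is a large power of $y^{1/\log\log y}$. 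Tenenbaum's proof of the cited theorem carries out exactly this kind of case analysis. As written, your argument proves $\Psi(y,z)\ll ye^{-u/2}\log z$, which is weaker than the lemma.
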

We also need the following simple divisor sum bound.
\begin{lemma} \label{divisorroughlemma}
Let $M \gg 1$ and let $Z= M^{c_1/(\log \log M)^{c_2}}$ for some constants $c_1,c_2 > 0$. Then for any $K>0$
\begin{align*}
\sum_{m \sim M} \tau(m)^K 1_{(m,P(Z))=1} \ll_{c_1,c_2,K} M.
\end{align*}
\end{lemma}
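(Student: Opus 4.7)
The plan is to dominate $\tau(m)^K$ pointwise by a higher divisor function and then estimate the resulting counting problem via iterated application of a standard sieve bound.

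First, since $\tau(m) \geq 1$, I may replace $K$ by $\lceil K \rceil$ and so assume $K$ is a positive integer. Checking the inequality on prime powers one obtains $\tau(m)^K \leq \tau_L(m)$ pointwise for $L=L(K)$ large enough (e.g.\ $L = 2^K$, where the only tight case is $v_p(m)=1$), so it suffices to bound
\[
S_L(M) := \#\bigl\{(d_1,\ldots,d_L) \in \N^L : d_1 \cdots d_L \leq 2M,\; (d_i, P(Z)) = 1 \text{ for all } i\bigr\}.
\]

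I would then invoke two standard sieve estimates. By the fundamental lemma of the sieve (or Brun's sieve),
\[
\Phi(X, Z) := \#\{n \leq X : (n, P(Z)) = 1\} \ll \frac{X}{\log Z} + 1 \qquad (X \geq 1),
\]
and partial summation yields $\sum_{n \leq X,\,(n,P(Z))=1} 1/n \ll (\log X)/\log Z + 1 \ll u$ for $X \leq 2M$, where $u := \log M/\log Z = (\log\log M)^{c_2}/c_1$. I would then prove by induction on $L$ that $S_L(M) \ll_L M u^{L-1}/\log Z$: the base case $L=1$ is exactly the sifted count $\Phi(2M,Z)$, and for the inductive step I sum over $d_L$ first, apply the inductive hypothesis to $S_{L-1}(M/d_L) \ll (M/d_L)u^{L-2}/\log Z$, and pick up one extra factor of $u$ from the reciprocal-sum estimate.

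Substituting the values of $u$ and $\log Z$ gives
\[
S_L(M) \ll_{c_1,c_2,K} \frac{M (\log\log M)^{c_2 L}}{c_1^L \log M} = o(M),
\]
which is $\ll M$ as claimed. I do not anticipate any serious obstacle: the single sifted-set saving of $1/\log Z$ contributes a $1/\log M$ factor in the final bound, which easily absorbs the polylog-of-log losses produced by the reciprocal sums. One comment is that Rankin's trick applied directly to the Dirichlet series $\sum \tau(m)^K m^{-\sigma} 1_{(m,P(Z))=1}$ would fall short, since at the optimal parameter choice the Euler product over $p > Z$ contributes $\exp(O(\log u))$, giving only a bound of the shape $\ll M(\log\log M)^{O(1)}$. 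The iterated divisor argument is thus essential for the clean $\ll M$ conclusion.
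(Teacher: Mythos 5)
Your proof is essentially the same as the paper's: dominate $\tau(m)^K$ by $\tau_L(m)$, expand into $L$ variables, gain a single factor $1/\log Z$ from a sieve bound on one variable, and control the remaining $L-1$ variables by reciprocal sums each costing $\ll(\log\log M)^{c_2}$, exactly as the paper does. One small patch: your stated inductive bound $S_{L-1}(X)\ll Xu^{L-2}/\log Z$ fails when $X\asymp 1$ (i.e.\ $d_L$ near $2M$), so the inductive hypothesis should read $S_{L-1}(X)\ll Xu^{L-2}/\log Z+1$, whose extra $+1$ contributes $\Phi(2M,Z)\ll M/\log Z$ after summing over $d_L$ and is harmless; the paper sidesteps this by summing over the maximal variable first, which keeps the sieve range long, but the end result is the same.
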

\begin{proof}
For some $L=L(K)$ we have by a standard sieve bound
\begin{align*}
&\sum_{m \sim M} \tau(m)^K 1_{(m,P(Z))=1} \ll \sum_{m \sim M} \tau_L(m) 1_{(m,P(Z))=1} = \sum_{n_1 \cdots n_L \sim M} 1_{(n_1,P(Z))=1} \cdots  1_{(n_L,P(Z))=1} \\
& \ll_{c_1,c_2,K}  \frac{M (\log\log M)^{c_2}}{\log M} \sum_{n_1, \dots, n_{L-1} \ll M}  \frac{1_{(n_1,P(Z))=1} \cdots  1_{(n_{L-1},P(Z))=1}}{n_1 \cdots n_{L-1} } \ll_{c_1,c_2,K} M,
\end{align*}
by computing the sum over $n_j = \max\{n_1,\dots, n_L\}$ first.
\end{proof}

\subsection{Arithmetic information} \label{aisection}
For the sieve argument we need arithmetic information given by the following two propositions, which state that $a_n$ has an exponent of distribution $5/8-\varepsilon$. We will prove these in Section \ref{typeisection}. The first is just a standard sieve axiom on the level of distribution of the sequence $a_n$, and the second is similar but twisted with the quadratic character $\chi.$ For the rest of this section we denote
\[X:= \sum_{n\sim x} b_n.\]
Recall that $X\asymp x^{5/8}$ by Lemma \ref{gaussprimeslemma}.
\begin{prop}\emph{(Type I information).} \label{typei1prop} Let $B>0$ be a large constant and let $\Delta \in [\log^{-B} x, 1]$. Let $D \leq x^{5/8-\eps}$ and $N$ be such that $DN \asymp x$. Let $\alpha(d)$ be divisor bounded coefficients and let $g(d)$ be as in (\ref{gdefinition}). Then for any $C >0$
\begin{align*}
\sum_{d \sim  D} \alpha(d) \sum_{\substack{n \sim x/d \\n \in (N,N(1+\Delta)]}} a_{dn} &= \sum_{d \sim  D} \alpha(d) \sum_{\substack{n \sim x/d \\n \in (N,N(1+\Delta)]}} b_{dn} + O_{B,C}(X \log^{-C} x) 
\end{align*}
and
\begin{align} \nonumber
\sum_{d \leq D}  \alpha(d) \sum_{n \sim x/d} a_{dn} &=\sum_{d \leq D} \alpha(d) \sum_{n \sim x/d} b_{dn} + O_C(X\log^{-C} x) \\ \label{typeimainterm}
&= X \sum_{d \leq D} \alpha(d) g(d) + O_C(X\log^{-C} x).
\end{align}
Furthermore, for $D \leq x^{2/3+\eps}$ we have the last asymptotic
\[
\sum_{d \leq D} \alpha(d) \sum_{n \sim x/d} b_{dn} 
= X \sum_{d \leq D} \alpha(d) g(d) + O_C(X\log^{-C} x)\] 
and for $\Delta=\log^{-B} x$ for any fixed $B>0$ the bound
\[
\sum_{d \leq D} \alpha(d) \sum_{\substack{n \in (N,N(1+\Delta)]}} b_{dn} 
\ll \Delta X \sum_{d \leq D} |\alpha(d)| g(d). 
\]

\end{prop}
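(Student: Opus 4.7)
Proposition \ref{typei1prop} packages three essentially independent assertions: (a) a comparison $\sum_d\alpha(d)\sum_n a_{dn} = \sum_d\alpha(d)\sum_n b_{dn} + O(X\log^{-C} x)$ at the threshold $D \leq x^{5/8-\eps}$ (both in dyadic and short-interval form), (b) the asymptotic $\sum_d \alpha(d)\sum_{n\sim x/d} b_{dn} = X\sum_d\alpha(d) g(d) + O(X\log^{-C} x)$ in the wider range $D \leq x^{2/3+\eps}$, and (c) a short-interval upper bound for the $b_n$-sum. The asymptotic for $a_n$ up to $D \leq x^{5/8-\eps}$ follows for free by combining (a) and (b), so only the first two claims need substantive work.

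For the $b_n$-asymptotic (b), I would work in the Gaussian integers: writing $a^2 + B^2 = N(a+iB)$, the divisibility $d \mid a^2 + B^2$ factors through the splitting of $d$ in $\Z[i]$, and the inner count becomes a count of Gaussian integers in a region of diameter $\asymp x^{1/2}$ whose norm is divisible by $d$, weighted smoothly by $\tfrac14 B^{-3/4}$ (coprimality conditions being removed by M\"obius). Applying Poisson summation to both coordinates yields the main term $X g(d)$ plus Fourier tails controlled by two-variable Kloosterman-type sums; because two long variables of length $x^{1/2}$ are available, the error is acceptable all the way up to $D \leq x^{2/3+\eps}$. The short-interval variant (c) follows from the same analysis after inserting a smooth cutoff in $n$, with $\Delta$ entering as a prefactor in both main and error terms; the upper bound version with $\Delta = \log^{-B} x$ just forgets the error term after majorization.

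The comparison (a) is the delicate step. Opening up $a_{dn}$ and $b_{dn}$ and applying Poisson in the $a$-variable (of length $\asymp x^{1/2}$, common to both sequences), for $a_{dn}$ the inner sum at fixed $d, b$ has main term $\tfrac{L(b)}{d}\#\{\nu \bmod d : \nu^2 + b^8 \equiv 0\}$, while for $b_{dn}$ the main term is $\tfrac{L(B)}{d}\#\{\nu \bmod d : \nu^2 + B^2 \equiv 0\}$ at each $B$ carrying the weight $\tfrac14 B^{-3/4}$. Under the substitution $B = b^4$ the residue counts coincide, and $\tfrac14 B^{-3/4}$ is precisely the density of fourth powers at $B$, so the main terms match up to acceptable error by partial summation together with Lemma \ref{gaussprimeslemma}. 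The heart of the matter lies in the Fourier tails: at the threshold $D \asymp x^{5/8-\eps}$ the pointwise Gauss-sum bound $d^{1/2+o(1)}$ is insufficient, and cancellation must be extracted from the short $b$-sum (length $\asymp x^{1/8}$). Following the strategy of \cite{fidivisor}, I would apply a second Poisson summation in $b$ to complete the sum modulo $d$, reducing to algebraic exponential sums in two variables which one estimates by Weil/Deligne-type bounds; the parallel expansion applied to $b_n$ produces a matching tail, and the two cancel term by term in the difference $a_{dn} - b_{dn}$. This two-variable exponential-sum bound is the main obstacle and is what pins down the admissible level $x^{5/8-\eps}$, mirroring the role of the analogous estimate for $a^2+b^6$ at level $x^{2/3-\eps}$ in \cite{fidivisor}.
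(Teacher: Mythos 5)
Your overall architecture (establish the $b_n$-asymptotic separately, deduce the $a_n$-asymptotic from the comparison plus the $b_n$-asymptotic) is sound, and your account of how the main terms are forced to match under $B=b^4$ is in the right spirit. But the engine you propose for the delicate comparison at the threshold $D\le x^{5/8-\eps}$ --- a second Poisson summation in $b$ to complete the short sum modulo $d$, followed by Weil/Deligne bounds for two-variable complete exponential sums --- does not reach the required level, and is not what the paper does.

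A quick bookkeeping check shows the problem. After Poisson in $a$ (weight $x^{1/2}/d$ per frequency, about $d/x^{1/2}$ frequencies) and then Poisson in $b$ (weight $x^{1/8}/d$ per frequency, about $d/x^{1/8}$ frequencies), with the complete two-variable sum bounded pointwise by $d^{1/2+o(1)}$, the total error over $d\sim D$ is
\begin{align*}
\sum_{d\sim D}\frac{x^{1/2}}{d}\cdot\frac{x^{1/8}}{d}\cdot\frac{d}{x^{1/2}}\cdot\frac{d}{x^{1/8}}\cdot d^{1/2+o(1)}\;\asymp\; D^{3/2+o(1)},
\end{align*}
which needs $D\ll x^{5/12-\eps}$, well short of $x^{5/8-\eps}$. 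The extra saving has to come from averaging over the modulus $d$, not from cancellation at each fixed $d$. The paper (following Friedlander--Iwaniec) applies only one Poisson summation, in $m$, and then treats the resulting sum as a bilinear form in $(h,\ell)$ against the points $\{\nu/d:\nu^2+1\equiv 0\ (d),\ d\sim D\}$; the key input is the large sieve inequality for these well-spaced quadratic roots (Lemma~\ref{largesieve}, a Duke--Friedlander--Iwaniec type estimate), which after Cauchy--Schwarz gives $(Dq+N)^{1/2}$ rather than the $D$ that a pointwise Gauss-sum bound would yield. That is precisely what pushes the admissible level to $x^{1/2+1/(2k)-\eps}=x^{5/8-\eps}$ for $k=4$.

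A secondary structural difference: the paper does not cancel Fourier tails between $a_{dn}$ and $b_{dn}$ term by term. It proves a single dyadic estimate (Proposition~\ref{typeidyadicprop}) for sequences $\lambda_\ell$ supported on $k$-th powers, with a main term that depends on $\lambda_\ell$ only through the explicit density factor. Applying it once with $k=4$ (taking $\ell=b^4$ for $a_n$) and once with $k=1$ (taking $\ell=b$ with the weight $\tfrac14 b^{-3/4}$ for $b_n$) gives asymptotics for both sides with the same explicit main term $X\sum_d\alpha(d)g(d)$; the comparison assertion (a) and the asymptotic (b) then both drop out, with (b) extending to a wider range because the weight $b^{-3/4}$ is much smaller in $L^2$ than $1_{\ell=b^4}$. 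So the load-bearing step you need to supply is the large-sieve argument, not an exponential-sum bound, and the $a_n$ vs.\ $b_n$ matching is handled by comparing both to a common explicit main term rather than by pairing error terms.
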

\begin{remark}
In our set up the last asymptotic actually holds up to $D \leq x^{1-\eps}$, but we will not need this.
\end{remark}

\begin{prop}\emph{(Type I$_\chi$ information).} \label{typei2prop} Let $B>0$ be a large constant and let $\Delta \in [\log^{-B} x, 1]$. Let $D \leq x^{5/8-\eps}$ and $N$ be such that $DN \asymp x$. Let  $\alpha(d)$ be divisor bounded coefficients. Then for any $C >0$
\begin{align*}
\sum_{d \sim  D} \alpha(d)\sum_{\substack{n \sim x/d \\n \in (N,N(1+\Delta)]}} a_{dn} \chi(n) = \sum_{d \sim  D} \alpha(d) \sum_{\substack{n \sim x/d \\n \in (N,N(1+\Delta)]}} b_{dn} \chi(n) + O_{B,C}(X\log^{-C} x) \ll_{B,C} X\log^{-C} x.
\end{align*}
and
\begin{align*}
\sum_{d \leq D} \alpha(d) \sum_{n \sim x/d} a_{dn} \chi(n) = \sum_{d \leq D} \alpha(d) \sum_{n \sim x/d} b_{dn} \chi(n) + O_C(X\log^{-C} x) \ll_C X\log^{-C} x.
\end{align*}
Furthermore, the bounds for the sums with $b_{dn}$ hold up to $D \leq x^{2/3+\eps}$.
\end{prop}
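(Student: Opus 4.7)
The plan is to adapt the proof of Proposition~\ref{typei1prop} given in Section~\ref{typeisection}, carrying the character twist $\chi(n)$ along as an extra weight throughout. In the untwisted argument, Poisson summation in a suitable variable yields a main term of size $X\,g(d)$ together with a negligible error; with the twist I expect the analogous main term to become an incomplete character sum which, by non-triviality of the primitive character $\chi$, exhibits cancellation strong enough to render it of size $O(X\log^{-C}x)$. The comparison between the $a_{dn}$-sum and the $b_{dn}$-sum will then run parallel to the untwisted case, since $\chi$ will only enter as a bounded multiplicative weight.

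Concretely, I would start from the Gaussian integer parametrisations $a^2+b^8=|a+ib^4|^2$ and $a^2+b^2=|a+ib|^2$ underlying Proposition~\ref{typei1prop}. For each fixed $b$ and each fixed $d$, I open the condition $d\mid a^2+b^8$ and apply Poisson summation to $a$ modulo $qd$ rather than modulo $d$; the enlargement by $q$ is natural because $\chi((a^2+b^8)/d)=\bar\chi(d)\chi(a^2+b^8)$ is periodic in $a$ with period $q$ (in the case $(d,q)=1$, the remaining case being handled by a harmless CRT splitting). By CRT, the zero-frequency term of the Poisson expansion factorises as
\[\frac{1}{q}\biggl(\sum_{a\bmod q}\chi(a^2+b^8)\biggr)\cdot\frac{1}{d}\#\{a\bmod d:d\mid a^2+b^8\}.\]
The second factor reproduces the density $g(d)$ from the untwisted analysis, while the first is a complete character sum of a quadratic polynomial twisted by the primitive character $\chi$ of conductor $q$. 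For generic $b$ the Weil bound yields $\sum_{a\bmod q}\chi(a^2+b^8)\ll\sqrt{q}\,\tau(q)$, producing a saving of size $q^{-1/2+o(1)}\leq\exp(-c\log^{1/10}x)$ over the trivial bound $q$, which easily absorbs any $\log^{-C}x$ loss coming from the divisor-bounded $\alpha(d)$. The exceptional $b$ for which the polynomial acquires a square factor modulo primes $p\mid q$ form a set of relative density $O(1/q)$ and contribute negligibly.

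The $a$-to-$b$ comparison $\sum a_{dn}\chi(n)-\sum b_{dn}\chi(n)=O(X\log^{-C}x)$ then follows by applying exactly the same Poisson analysis to both sides, the character $\chi$ playing the role of an inert bounded weight: the resulting main terms agree via the same identification of Gaussian ideal structures as in Proposition~\ref{typei1prop}, with the factor $b^{-3/4}$ in $b_n$ modelling the sparseness of fourth-power values of $b$. The extended range $D\leq x^{2/3+\eps}$ for the $b_{dn}$-sum corresponds to the two-dimensional parametrisation $a^2+b^2=N(a+ib)$ in which both $a$ and $b$ can be Poisson-summed, yielding the familiar $x^{2/3}$ level of distribution for Gaussian norms.

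The main obstacle will be establishing the Weil-type bound for the inner character sum uniformly in $b$, together with a clean treatment of the exceptional $b$ where $a^2+b^8$ degenerates modulo primes $p\mid q$, and then ensuring that the non-zero Poisson frequencies do not wipe out the character saving in the delicate regime $D$ close to $x^{5/8}$ (respectively $x^{2/3}$ for the $b$-sums). Once this is in place, the remainder of the argument is simply a twisted version of the proof of Proposition~\ref{typei1prop}, generalising the Friedlander-Iwaniec divisor analysis of $a^2+b^6$ in \cite{fidivisor} to the polynomial $a^2+b^8$ with the character insertion.
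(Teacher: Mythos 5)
Your overall plan is correct and matches the paper's strategy: the paper proves Proposition~\ref{typei2prop} by deducing it from a dyadic Type~I$_\chi$ estimate (Proposition~\ref{typei2dyadicprop}) whose proof is exactly the twisted analogue of the untwisted one --- Poisson summation in the $m$-variable but modulo $dq$ instead of $d$, factoring the zero frequency via CRT into a density $\varrho(d)/d$ and a complete character sum modulo $q$, then handling the latter by the Weil bound (Lemma~\ref{charactersumlemma}) and the non-zero frequencies by the same large sieve inequality (Lemma~\ref{largesieve}) used for Proposition~\ref{typei1prop}. One small point in your favour: you would not need a CRT splitting for $(d,q)>1$ at all, since $d\mid\ell^2+m^2$ forces $\chi(\ell^2+m^2)=0$ whenever $(d,q)>1$, so that case vanishes identically.

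The genuine gap is in your treatment of the exceptional $\ell=b^k$. You claim that the $b$ for which the Weil bound degrades form a set of relative density $O(1/q)$. That is not correct: the Weil bound for $\sum_{\beta\,(q)}\chi(\beta^2 d^2+q_\ell^2)$ loses a factor $(q,q_\ell^2)^{1/2}$, and the $b$ for which $(b,q)>1$ have density roughly $\sum_{p\mid q}1/p$, which need not be a power of $q$; if $q$ is, say, a primorial, this density is not small at all, so the exceptional set is not negligible by a density count alone. The paper handles this carefully --- and in fact flags that this very point is a gap in the original argument of Friedlander--Iwaniec \cite[Section~14]{fiillusory}. The fix is structural, not a density estimate: one writes $q_0=q_0(n)$ for the largest divisor of $n$ supported on primes dividing $q$, bounds trivially the contribution of $n$ with $q_0>q^\eta$ (which does give a power-of-$q$ saving because one is summing $q_0^{-1}$ over $q_0$ composed only of primes dividing $q$, not merely counting $b$ with $(b,q)>1$), and then for the remaining $\ell=n^k$ with $q_0<q^\eta$ one runs CRT with the coprime-to-$q$ part $\ell/q_\ell$ (where $q_\ell=q_0^k$), so that the complete sum becomes $\sum_\beta\chi(\beta^2 d^2+q_\ell^2)\ll(q,q_\ell^2)^{1/2}q^{1/2+\eps}\ll q^{1/2+\eta k+\eps}$, giving the needed power-of-$q$ saving uniformly. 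Without this step your argument is not complete, and the density heuristic you propose would not close it.

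Beyond this, the remaining details you sketch (character as an inert bounded weight in the $a$-to-$b$ comparison, the extended range $D\leq x^{2/3+\eps}$ for $b_{dn}$, and the non-zero Poisson frequencies) are handled in the paper exactly as you describe, with the additional bookkeeping of splitting $q_\ell$ into residue classes modulo $q$ to separate the character $\chi(\beta^2 d^2+\gamma^2)$ from the large-sieve sum over $d$ --- this is the source of the $q^2$ factor in Proposition~\ref{typei2dyadicprop}, which is harmless since $q=x^{o(1)}$.
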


We will also need the following proposition to bound certain error terms in terms of $L(1,\chi)$. This follows from \cite[Lemmata 3.7 and 3.9]{fiillusory} (as mentioned in \cite[Section 14]{fiillusory}, the  $g(d)$ defined by (\ref{gdefinition}) is easily shown to satisfy the required assumptions). 
\begin{prop} \label{exceptionalprop} \emph{(Exceptional characters).} Let $\lambda:= (1\ast \chi)$. Then for any $x > z \geq q^9$ we have
\begin{align*}
\sum_{n \leq x} \chi(n) g(n) \ll L(1,\chi) 
\end{align*}
and
\begin{align*}
\sum_{z < n \leq x} \lambda(n)g(n) \ll L(1,\chi) \log^2 x. 
\end{align*}
\end{prop}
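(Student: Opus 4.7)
The plan is to deduce both estimates from \cite[Lemmata 3.7 and 3.9]{fiillusory}; the substance of my proof reduces to verifying that the specific multiplicative function $g$ defined in (\ref{gdefinition}) fits the hypotheses of those lemmas. I would split the work into (i) checking the sieve-theoretic axioms on $g$, and (ii) recalling briefly why those axioms, together with the smallness of $L(1,\chi)$, yield the stated bounds.

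For (i), multiplicativity holds by definition, and $0 \leq g(p) < 1$ is immediate since $g(p) \leq 2/(p+1)$ at odd $p \nmid q$, while $g(2) = 1/3$ and $g(p)=0$ for $p \mid q$. The key nontrivial input is the linear-sieve dimension-one bound
\[
\prod_{w \leq p < z}(1-g(p))^{-1} \leq \frac{\log z}{\log w}\bigg(1 + \frac{C}{\log w}\bigg), \quad 2 \leq w < z,
\]
which I would check by writing $g(p) = (1+\chi_4(p))/p + O(1/p^2)$ for odd $p \nmid q$, applying Mertens, and using $\sum_{p \leq z} \chi_4(p)/p = O(1)$ (a consequence of $L(1,\chi_4) = \pi/4 \neq 0$ via partial summation) to obtain $\sum_{w\leq p<z} g(p) = \log\log z - \log\log w + O(1/\log w)$; exponentiation then yields the claim.

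For (ii), the underlying analytic mechanism I would describe is a Dirichlet-series factorization. Form $F_\chi(s) := \sum_n \chi(n)g(n)n^{-s}$ and, by direct comparison of Euler factors, write $F_\chi(s) = L(s+1,\chi) H(s)$ where $H$ is holomorphic and of tightly controlled size on a half-plane $\mathrm{Re}(s) > -\delta$. Applying Perron's formula and shifting the contour past $s=0$ produces the main residue $L(1,\chi) H(0) \ll L(1,\chi)$, with the remainder controlled via the standard zero-free region for $L(\cdot,\chi)$ and the hypothesis $z \geq q^9$ (which ensures $x$ is large enough on the logarithmic scale of $q$). For the second bound I would rerun the argument with $F_\lambda(s) := \sum_n \lambda(n)g(n)n^{-s} = L(s+1,\chi)\,\zeta(s+1)\,H_\lambda(s)$; the simple pole of $\zeta$ at $s=0$ injects one extra $\log x$ into the residue and the truncated contour contributes another $\log x$, accounting for the $\log^2 x$ factor.

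The main obstacle is the uniform-in-$q$ bookkeeping for the auxiliary factor $H(s)$, which implicitly packages terms like $L(s+1, \chi\chi_4)$ and must be shown to remain bounded by an acceptable polylog on a strip to the left of $\mathrm{Re}(s)=0$, so that the exceptional smallness $L(1,\chi) \leq \log^{-100} q$ absorbs any losses. This is precisely the analysis worked out in \cite{fiillusory}, which is why once the axioms on $g$ are verified, Lemmas 3.7 and 3.9 there apply essentially verbatim and give the proposition.
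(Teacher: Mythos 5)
Your proposal takes essentially the same route as the paper: the paper's entire ``proof'' of this proposition is the sentence preceding it, namely a citation of \cite[Lemmata 3.7 and 3.9]{fiillusory} together with the remark (borrowed from \cite[Section 14]{fiillusory}) that the $g$ of (\ref{gdefinition}) is easily checked to satisfy the required hypotheses. Your part (i) carries out that verification explicitly and correctly: the bound $g(p)\leq 2/(p+1)$ for odd $p\nmid q$, the value $g(2)=1/3$, the expansion $g(p)=(1+\chi_4(p))/p+O(1/p^2)$, and the Mertens-plus-$L(1,\chi_4)\neq 0$ argument for the dimension-one sieve condition are all right and are exactly the content the paper leaves implicit.

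One caution on part (ii): your sketch of the inner mechanism (Perron's formula, shifting the contour past $s=0$, and invoking a ``standard zero-free region for $L(\cdot,\chi)$'') is not quite how Friedlander--Iwaniec prove these lemmas. Their argument is elementary, working directly with the Euler product of $\sum_n \chi(n)g(n)n^{-s}$ near $s=0$ (which factors through $L(1,\chi)$ and $L(1,\chi\chi_4)$) and with the convolution structure $\lambda=1\ast\chi$ for the second bound; the hypothesis $z\geq q^9$ enters in an elementary tail estimate, not a contour truncation. Appealing to a zero-free region for the exceptional $\chi$ is also slightly off in spirit, since the whole point of the setup is that such a region may fail; the smallness of $L(1,\chi)$ is what is exploited, not a zero-free region. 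None of this affects the validity of your proof, since you ultimately defer to the cited lemmas once the hypotheses on $g$ are verified, which is precisely what the paper does.
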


\subsection{Initial decomposition} \label{initialsection}
Let $\varepsilon>0$ be  small and define the parameter $\gamma:=1/24+\varepsilon$ so that $2/3-\gamma = 5/8-\varepsilon$ is the exponent of distribution of $a_n$.  Using $\lambda'= \lambda \ast \Lambda$ (see (\ref{lambdaformula})) we get
\begin{align*}
\lambda'(n) - \Lambda(n) = \sum_{\substack{n=km \\ m > 1}} \Lambda(k) \lambda(m) .
\end{align*}
Hence, for $z:= x^{1/(\log \log x)^2}$  we have
\begin{align*}
 \sum_{n \sim x} a_n \Lambda(n) &= \sum_{n \sim x} a_n \Lambda(n) 1_{(n,P(z))=1} + O_C(x^{5/8}/\log^C x)  \\
&=\sum_{n \sim x} a_{n} \lambda'(n) 1_{(n,P(z))=1} - \sum_{\substack{km \sim x \\ k,m \geq z}} a_{km} \Lambda(k)\lambda(m)  1_{(m,P(z))=1}  +O_C(x^{5/8}/\log^C x) \\
&=: S_1 - S_2+ O_C(x^{5/8}/\log^C x).
\end{align*}

Similarly as in \cite{fiillusory}, by the lacunarity of $\lambda(m)$ we expect that $S_2 =o(S_1)$, but this is out of reach. We will show that $S_1 = (1+o(1)) \sum_{n \sim x} b_n \Lambda(n)$ and  $S_2 \leq (0.811 + o(1))\cdot \sum_{n \sim x} b_n \Lambda(n)$ which together imply Theorem \ref{maintheorem}.
\begin{remark}
For technical reasons we have chosen $z$ a bit smaller than $x^\varepsilon$ (compare with Section \ref{sketchsection}). This has the benefit that evaluating $S_1$ is a lot easier. On the downside bounding $S_2$ is slightly more difficult and we require Lemma \ref{divisorlemma} for this.
\end{remark}

\subsection{Sum $S_1$}\label{s1section}
Let $D_1:= x^{\varepsilon}$ for some small $\varepsilon >0$. We expand the condition $1_{(n,P(z))=1}$ by using the M\"obius function and split the sum to get
\begin{align*}
S_1 &= \sum_{n \sim x} a_n \lambda'(n) \sum_{d|(n,P(z))} \mu (d)  \\
&=\sum_{n \sim x} a_n \lambda'(n) \sum_{\substack{d|(n,P(z)) \\ d \leq D_1}} \mu (d)+\sum_{n \sim x} a_n \lambda'(n) \sum_{\substack{d|(n,P(z)) \\ d > D_1}}\mu (d) 
=:S'_{1} + R_1.
\end{align*}
To handle the error term $R_1$, note that if $d|P(z)$ and $d > D_1$, then $d$ has a divisor in $[D_1,2zD_1]$. Since $z= x^{1/(\log \log x)^2}$, by Lemma \ref{divisorlemma} (with $k=2$), Proposition \ref{typei1prop}, and Lemma \ref{smoothlemma} we get
\begin{align} \label{smootherror}
R_1 &\ll (\log x) \sum_{\substack{n \sim x  \\ \exists d| (n,P(z)), \, d \in [D_1,2zD_1]}} a_n \tau(n)^2 \\ \nonumber
& \ll (\log x) \sum_{\substack{d \in [D_1,2zD_1] \\ d| P(z)}} \sum_{c \leq (2x)^{1/2}} \tau(cd)^4 \sum_{n \sim x/cd} a_{cdn} \\ \nonumber
&\ll (\log x) x^{5/8} \sum_{\substack{d \in [D_1,2zD_1] \\ d| P(z)}} \sum_{c \leq (2x)^{1/2}} \tau(cd)^4 g(cd) \ll_C x^{5/8} \log^{-C} x
\end{align}
To get the last bound use $\tau(cd)^4g(cd) \leq \tau(cd)^5 /(cd) \leq \tau(c)^5 \tau(d)^5 /(cd)$ and apply Lemma \ref{divisorlemma} to the variable $d$ before using Lemma \ref{smoothlemma}.

For the main term we write
\begin{align*}
S'_1 &= \sum_{\substack{d | P(z) \\ d \leq D_1}} \mu(d) \sum_{\substack{n \sim x \\ n \equiv 0 \,(d)}} a_{n} \lambda'(n)
=\sum_{\substack{d | P(z) \\ d \leq D_1}} \mu(d) \sum_{\substack{mn \sim x \\ mn \equiv 0 \,(d)}} a_{mn} \chi(m) \log n\\
& = \sum_{\substack{d | P(z) \\ d \leq D_1}} \mu(d) \sum_{\substack{mn \sim x \\ mn \equiv 0 \,(d) \\ n > x^{1/2}}} a_{mn} \chi(m) \log n + \sum_{\substack{d | P(z) \\ d \leq D_1}} \mu(d) \sum_{\substack{mn \sim x \\ mn \equiv 0 \,(d) \\ n \leq x^{1/2}}} a_{mn} \chi(m) \log n =: S_{11} + S_{12}.
\end{align*}
We write (denoting $d_1=(m,d)$)
\begin{align*}
S_{11} &= \sum_{\substack{d_1d_2 | P(z) \\ d_1d_2 \leq D_1}} \mu(d_1d_2)
\sum_{\substack{d_1 m \ll x^{1/2} \\ (m,d_2)=1}} \chi(d_1m)
\sum_{\substack{n \sim x/md_1d_2 \\ d_2n > x^{1/2}}} a_{d_1d_2mn}  \log d_2 n 
\end{align*}
We will use Proposition \ref{typei1prop} to evaluate this sum but first we need to remove the cross-condition $d_2n > x^{1/2}$ and the weight $ \log d_2 n$ by using a finer-than-dyadic decomposition to the sums over $d_2$ and $n$. That is, for $\Delta= \log^{-B} x$ for some large $B >0$ we split $S_{11}$ into 
\begin{align*}
\sum_{\substack{i,j \ll \log^{B+1} x \\D_2 =(1+\Delta)^{i} \\ N=(1+\Delta)^{j} \\ D_2 N(1+\Delta)^2 > x^{1/2}}} \sum_{\substack{d_1d_2 | P(z) \\ d_1d_2 \leq D_1 \\ d_2 \in (D_2,D_2(1+\Delta)]}} \mu(d_1d_2)
\sum_{\substack{d_1 m \ll x^{1/2} \\ (m,d_2)=1}} \chi(d_1m)
\sum_{\substack{n \in (N,N(1+\Delta)] \\n \sim x/md_1d_2 \\ d_2n > x^{1/2}}} a_{d_1d_2mn}  \log d_2 n.
\end{align*}
Here we can write
\[
\log d_2 n = \log D_2 N + O(\log^{-B} x),
\]
where the error term will contribute by Lemma \ref{divisorlemma} and Proposition \ref{typei1prop}
\begin{align*}
\ll \log^{-B} x \sum_{n \sim x} \tau_4(n) a_n &\ll \log^{-B}x \sum_{n \sim x} \tau(n)^4 a_n \\
&\ll \log^{-B} x \sum_{d  \ll x^{1/2}} \tau(d)^8  \sum_{n \sim x/d}  a_n \ll_B x^{5/8}\log^{O(1)-B} x
\end{align*}
so that $S_{11}=S_{11}'+O_B(x \log^{O(1)-B}x)$ with
\begin{align*}
S'_{11} := \sum_{\substack{i,j \ll \log^{B+1} x \\D_2 =(1+\Delta)^{i} \\ N=(1+\Delta)^{j} \\ D_2 N(1+\Delta)^2 > x^{1/2}}} \log D_2N \sum_{\substack{d_1d_2 | P(z) \\ d_1d_2 \leq D_1 \\ d_2 \in (D_2,D_2(1+\Delta)]}} \mu(d_1d_2)
\sum_{\substack{d_1 m \ll x^{1/2} \\ (m,d_2)=1}} \chi(d_1m)
\sum_{\substack{n \in (N,N(1+\Delta)] \\n \sim x/md_1d_2 \\ d_2n > x^{1/2}}} a_{d_1d_2mn}.
\end{align*}
The cross-condition $d_2n >x^{1/2}$ holds trivially and may be dropped except in the diagonal part where
\begin{align*}
(1+\Delta)^{-2} x^{1/2} < D_2 N \leq x^{1/2}.
\end{align*}
The contribution from this diagonal part is bounded by using Proposition \ref{typei1prop}
\begin{align*}
& \ll (\log x)\sum_{\substack{i,j \ll \log^{B+1} x \\D_2 =(1+\Delta)^{i} \\ N=(1+\Delta)^{j} \\(1+\Delta)^{-2} x^{1/2} < D_2 N \leq x^{1/2}}}  \sum_{\substack{d_1d_2 | P(z) \\ d_1d_2 \leq D_1 \\ d_2 \in (D_2,D_2(1+\Delta)]}} 
\sum_{\substack{d_1 m \ll x^{1/2} \\ (m,d_2)=1}} 
\sum_{\substack{n \in (N,N(1+\Delta)] \\n \sim x/md_1d_2}} a_{d_1d_2mn}
\\ & \ll_C x^{5/8}\log^{-C} x+ (\log x)\sum_{\substack{i,j \ll \log^{B+1} x \\D_2 =(1+\Delta)^{i} \\ N=(1+\Delta)^{j} \\(1+\Delta)^{-2} x^{1/2} < D_2 N \leq x^{1/2}}}  \sum_{\substack{d_1d_2 | P(z) \\ d_1d_2 \leq D_1 \\ d_2 \in (D_2,D_2(1+\Delta)]}} 
\sum_{\substack{d_1 m \ll x^{1/2} \\ (m,d_2)=1}} 
\sum_{\substack{n \in (N,N(1+\Delta)] \\n \sim x/md_1d_2}} b_{d_1d_2mn}  \\ &\ll_C x^{5/8}\log^{-C} x +(\log^{O(1)} x) x^{5/8} \Delta^{2} \sum_{\substack{i,j \ll \log^{B+1} x \\D_2 =(1+\Delta)^{i} \\ N=(1+\Delta)^{j} \\(1+\Delta)^{-2} x^{1/2} < D_2 N \leq x^{1/2}}}  1  \ll_{B} x^{5/8} \log^{O(1)-B}x
\end{align*}
by choosing $C=B$. Hence, the cross-condition $d_2n > x^{1/2}$ may be dropped and we get $S_{11}=S_{11}''+O_B(x \log^{O(1)-B}x)$ with
\begin{align*}
S''_{11} := \sum_{\substack{i,j \ll \log^{B+1} x \\D_2 =(1+\Delta)^{i} \\ N=(1+\Delta)^{j} \\ D_2 N(1+\Delta)^2 > x^{1/2}}} \log D_2N \sum_{\substack{d_1d_2 | P(z) \\ d_1d_2 \leq D_1 \\ d_2 \in (D_2,D_2(1+\Delta)]}} \mu(d_1d_2)
\sum_{\substack{d_1 m \ll x^{1/2} \\ (m,d_2)=1}} \chi(d_1m)
\sum_{\substack{n \in (N,N(1+\Delta)] \\n \sim x/md_1d_2 }} a_{d_1d_2mn}.
\end{align*}
Applying a similar decomposition to the corresponding sum with $b_{d_1d_2mn}$ and using
Proposition \ref{typei1prop} we get \begin{align*}
S_{11} &=  \sum_{\substack{d_1d_2 | P(z) \\ d_1d_2 \leq D_1}} \mu(d_1d_2)
\sum_{\substack{d_1 m \ll x^{1/2} \\ (m,d_2)=1}} \chi(d_1m) \sum_{\substack{n \sim x/md_1d_2 \\ d_2 n > x^{1/2}}} b_{d_1d_2mn} \log d_2 n + O_C(x^{5/8} \log^{-C} x)\\
&=:  M_{11} + O_C(x^{5/8} \log^{-C} x).
\end{align*}

Similarly, we get by Proposition \ref{typei2prop}  (denoting $d_2=(n,d)$)
\begin{align*}
S_{12} &= \sum_{\substack{d_1d_2 | P(z) \\ d_1d_2 \leq D_1}} \mu(d_1d_2)
\chi(d_1) \sum_{\substack{d_2 n \leq x^{1/2} \\ (n,d_1)=1}} \log d_2 n
\sum_{\substack{m \sim x/nd_1d_2}} a_{d_1d_2mn} \chi(m)  \\
&=
\sum_{\substack{d_1d_2 | P(z) \\ d_1d_2 \leq D_1}} \mu(d_1d_2)
\chi(d_1) \sum_{\substack{d_2 n \leq x^{1/2} \\ (n,d_1)=1}} \log d_2 n
\sum_{\substack{m \sim x/nd_1d_2}} b_{d_1d_2mn} \chi(m)  + O_C(x^{5/8} \log^{-C} x) \\
&=: M_{12} + O_C(x^{5/8} \log^{-C} x)
\end{align*}
That is, in the sums $S_11$ and $S_12$ we have managed to replace $a_n$ by $b_n$. By reversing the steps to recombine we get
\begin{align*}
M_{11} + M_{12} =  \sum_{\substack{n \sim x }} b_{n} \lambda'(n) \sum_{\substack{d | (n,P(z)) \\ d \leq D_1}} \mu(d) =: M_1
\end{align*}
By a similar argument as in (\ref{smootherror}) we can add the part $d > D_1$  back into the sum and we get
\begin{align*}
M_1 &= \sum_{\substack{n \sim x }} b_{n} \lambda'(n)1_{(n,P(z))=1} +   O_C(x^{5/8}/\log^C x)\\
&\geq \sum_{\substack{n \sim x }} b_{n} \Lambda(n)1_{(n,P(z))=1} +  O_C(x^{5/8}/\log^C x)
\end{align*}
by using $\lambda'(n) \geq \Lambda(n)$. 
Thus, by Lemma \ref{gaussprimeslemma} we have $S_1 \geq (1+o(1)) \sum_{n \sim x} b_n \Lambda(n)$, so that for the lower bound result it suffices to show that $S_{2} \leq (0.811 + o(1))\cdot \sum_{n \sim x} b_n \Lambda(n)$. We now proceed to do this, and at the end of this section we will show how to get the upper bound in Theorem \ref{maintheorem}.

\begin{remark} We have used Lemma \ref{smoothlemma} to handle the restriction $(n,P(z))=1$ instead of applying the fundamental lemma of sieve. Thanks to this we were able to use the trivial lower bound $\lambda'(n) \geq \Lambda(n)$ to simplify the evaluation of the main term. 
\end{remark}
\subsection{Sum $S_2$}
Recall that $\gamma=1/24+\eps$ and $2/3-\gamma=5/8-\eps$. We split the sum $S_2$ into three ranges according to the size of $k$
\begin{align*}
S_2 &=\sum_{\substack{km \sim x \\ k,m \geq z}} a_{km} \Lambda(k)\lambda(m)  1_{(m,P(z))=1} \\ 
& = \sum_{\substack{km \sim x \\k > x^{1/3+\gamma} \\ m \geq z}} a_{km} \Lambda(k) \lambda(m)  1_{(m,P(z))=1} + \sum_{\substack{km \sim x \\  x^{1/3-2\gamma} < k \leq x^{1/3+\gamma} \\ m \geq z}} a_{km} \Lambda(k) \lambda(m)  1_{(m,P(z))=1} \\
& \hspace{150pt} +\sum_{\substack{km \sim x \\ z \leq k \leq  x^{1/3-2\gamma} \\ m \geq z}} a_{km} \Lambda(k) \lambda(m)  1_{(m,P(z))=1} \\
&=: S_{21} +S_{22} + S_{23}.
\end{align*}
Using the assumption that $L(1,\chi)$ is small, we will show that the contribution from $ S_{21}$ and $ S_{23}$ is negligible, and that $S_{22} \leq (0.811 + o(1))\cdot \sum_{n \sim x} b_n \Lambda(n)$.
 
\subsubsection{Sum $S_{21}$}
Here we have $k > x^{1/3+\gamma}$, so that by a crude estimate we get
\begin{align*}
S_{21} & = \sum_{\substack{km \sim x \\k \geq x^{1/3+\gamma} \\ m \geq z}} a_{km} \Lambda(k) \lambda(m)  1_{(m,P(z))=1} \\
&\ll (\log x)\sum_{z \leq m \ll x^{2/3-\gamma}} \lambda(m) \sum_{k \sim x/m} a_{k m}:=S_{21}' = M_{21} + R_{21},
\end{align*}
where
\begin{align*}
M_{21} := (\log x) X\sum_{z \leq m \ll x^{2/3-\gamma}} \lambda(m) g(m) \quad \text{and} \quad R_{21}:= S_{21}'-M_{21}.
\end{align*}
By Proposition \ref{typei1prop} we get
\begin{align*}
R_{21} \ll_C x^{5/8} \log^{-C} x,
\end{align*}
and by Proposition \ref{exceptionalprop} we have
\begin{align*}
M_{21} \ll x^{5/8} L(1,\chi) \log^3 x.
\end{align*}
Hence, we have
\begin{align*}
S_{21} \ll_C x^{5/8} L(1,\chi) \log^3 x +  x^{5/8} \log^{-C} x
\end{align*}
 
\subsubsection{Sum $S_{23}$}
Recall that here $m \gg x^{2/3+2\gamma}$. By positivity we may drop the condition $(m,P(z))=1$. Writing
 \begin{align*}
 \lambda(m) = \sum_{cd=m} \chi(d)
 \end{align*}
we split the sum $S_{23}$ into two ranges, $d \leq x^{1/3+ \gamma}$ or $d> x^{1/3+ \gamma}$. We get $S_{23} \leq S_{231}+S_{232}$, where
\begin{align*}
S_{231}&:= \sum_{z \leq k \leq x^{1/3-2\gamma}}\Lambda(k) \sum_{c \ll  x^{2/3-\gamma}/k}  \sum_{\substack{d \sim x/ck  \\ d > x^{1/3+\gamma}}} \chi(d) a_{cdk} \quad \text{and} \\ 
S_{232}&:= \sum_{z \leq k \leq x^{1/3-2\gamma}}\Lambda(k) \sum_{d \leq  x^{1/3+\gamma}} \chi(d) \sum_{\substack{c \sim x/dk }} a_{cdk}.
\end{align*}
By Proposition \ref{typei2prop} we get (after applying a finer-than-dyadic decomposition similarly as with $S_{11}$ to remove cross-conditions)
\begin{align*}
S_{231} \ll_C x^{5/8} \log^{-C} x.
\end{align*}
By Propositions \ref{typei1prop} and \ref{exceptionalprop} we get (since the contribution from $(k,d)>1$ is trivially negligible)
\begin{align*}
S_{232} &= X \sum_{z \leq k \ll x^{1/3-2\gamma}}\Lambda(k) \sum_{d \leq x^{1/3+\gamma}} \chi(d) g(dk) + O_C( x^{5/8} \log^{-C} x) \\
& \ll_C  X \sum_{z \leq k \ll x^{1/3-2\gamma}}\Lambda(k)g(k) \sum_{d \leq x^{1/3+\gamma}} \chi(d) g(d) + x^{5/8} \log^{-C} x \\
&\ll_C x^{5/8} L(1,\chi) \log x + x^{5/8} \log^{-C} x.
\end{align*}
Combining the bounds, we have
\begin{align*}
S_{23} \ll_C x^{5/8} L(1,\chi) \log x + x^{5/8} \log^{-C} x.
\end{align*}

\subsubsection{Sum $S_{22}$}
We have 
\begin{align*}
S_{22} = \sum_{\substack{km \sim x  \\ x^{1/3-2\gamma} <  k \leq x^{1/3+\gamma}} } a_{km} \Lambda(k) \lambda(m)  1_{(m,P(z))=1} 
\end{align*} 
It turns out that we can handle all parts except when $m$ is a prime, so we write
\begin{align*}
S_{22} &= \sum_{\substack{km \sim x  \\  x^{1/3-2\gamma} <  k \leq x^{1/3+\gamma} \\ m \notin \PP}} a_{km} \Lambda(k) \lambda(m)  1_{(m,P(z))=1} + \sum_{\substack{kp \sim x  \\  x^{1/3-2\gamma} <  k \leq x^{1/3+\gamma}  }} a_{kp} \Lambda(k) \lambda(p)  \\
&=: S_{221}+ S_{222}
\end{align*}

In $S_{221}$ we have $m=m_1m_2$ for  $m_1, m_2 \geq z$. Since $(m_1m_2,P(z))=1$, the part where $(m_1,m_2) > 1$ trivially contributes at most $ \ll z^{-1} x^{5/8} \log^{O(1)}x$ which is negligible.
Hence, using $\lambda(m_1m_2) =\lambda(m_1)\lambda(m_2)$ for $(m_1,m_2)=1$ we get
\begin{align*}
S_{221} \leq \sum_{\substack{km_1 m_2 \sim x  \\ x^{1/3-2\gamma} <  k \leq x^{1/3+\gamma} \\ m_1, m_2 \geq z}} a_{km_1 m_2} \Lambda(k) \lambda(m_1)  \lambda(m_2)1_{(m_1 m_2,P(z))=1} + O_C(x^{5/8} \log^{-C} x).
\end{align*}
We split this sum into two parts according to $k m_1 > x^{1/2}$ or $k m_1  \leq  x^{1/2}$. In either case we get $m_j \ll x^{1/2}$ for some $j \in \{1,2\}$.  We combine the variables $\ell=k m_{2-j}$ and use $\lambda(m_{2-j}) \leq \tau(m_{2-j})$ to obtain by Lemma \ref{divisorlemma}
\begin{align*}
S_{221} &\leq (\log x) \sum_{z \leq m \ll x^{1/2}} \lambda(m)  \sum_{\ell \sim x/m} \tau(\ell)1_{(\ell,P(z))=1} a_{\ell m}  + O_C(x^{5/8} \log^{-C} x)\\
& \ll_{K}(\log x) \sum_{z \leq m \ll x^{1/2}} \lambda(m)  \sum_{d \leq x^{1/K}} \tau(d)^K 1_{(d,P(z))=1} \sum_{\ell \sim x/dm} a_{d \ell m}  + O_C(x^{5/8} \log^{-C} x)
\end{align*}
By Proposition \ref{typei1prop} we get (once we choose $K$ large enough so that $1/2+1/K < 2/3-\gamma$)
\begin{align*}
S_{221} \ll_K M_{221}  + O_{C}(x^{5/8} \log^{-C} x),
\end{align*}
 where 
\begin{align*}
M_{221}   = X (\log x) \sum_{\substack{z \leq m \ll x^{1/2} }}  \lambda(m)  \sum_{d \leq x^{1/K}} \tau(d)^K 1_{(d,P(z))=1} g(d)g(m),
\end{align*}
since the contribution from the part the part $(d,m)>1$ is negligible by a trivial bound.
Thus, by Proposition \ref{exceptionalprop} and Lemma \ref{divisorroughlemma} we have
\begin{align*}
M_{221}& \ll_{C} X (\log x) \sum_{d \leq x^{1/K}} \tau(d)^K  g(d) 1_{(d,P(z))=1} \sum_{z \leq m \ll x^{1/2}} \lambda(m) g(m) \\
&\ll_{C} x^{5/8} L(1,\chi) \log^5 x.
 \end{align*} 
 
 Combining the above bounds we get
 \begin{align*}
 S_{221} \ll_C x^{5/8} L(1,\chi) \log^5 x + x^{5/8} \log^{-C} x,
 \end{align*}
 so all that remains is to bound the sum $S_{222}$. The savings here will come from the fact that $k$ is restricted to a fairly narrow range.
\subsection{Bounding the error term $S_{222}$}
We have
\begin{align*}
S_{222} := \sum_{\substack{kp \sim x  \\ x^{1/3-2\gamma} <  k \leq x^{1/3+\gamma} }} a_{kp} \Lambda(k)  (1+\chi(p)) 
\end{align*}
We will apply the linear sieve upper bound to the non-negative sequence
\begin{align*}
c_n := a_{kn}(1+\chi(n))
\end{align*}
with level of distribution $x^{2/3-\gamma}/k$ (note that by exploiting the cancellation from $\chi(n)$ we save a factor of 2 compared to using the trivial bound $\lambda(p) \leq 2$).
For $(d,k)=1$ define $R(d,k)$  by
\begin{align*}
\sum_{\substack{n \sim x/k \\ n \equiv 0 \, (d)}} a_{kn}(1+\chi(n)) = g(d) g(k) X + R(d,k).
\end{align*}
Note that the contribution from sums with $(d,k)>1$ is negligible by trivial estimates. Then by Lemma \ref{linearlemma} with $D_k= x^{2/3-\gamma}/k$ we have
\begin{align*}
S_{222} \leq (1+o(1))M_{222} + R_{222},
\end{align*}
where
\begin{align*}
M_{222} := X \sum_{x^{1/3-2\gamma} <  k \leq x^{1/3+\gamma} } \Lambda(k)g(k) 2e^{\gamma_1} \prod_{p \leq D_k} (1-g(p))
\end{align*}
and
\begin{align*}
R_{222} =  \sum_{\substack{d k \leq x^{2/3-\gamma} \\ (d,k)=1 }} \Lambda(k) |R(d, k)| \, \ll_C x^{5/8}\log^{-C} x
\end{align*}
by Propositions \ref{typei1prop} and \ref{typei2prop}. Applying Lemma \ref{gaussprimeslemma} we get 
\begin{align*}
M_{222} = (2+o(1)) \sum_{n \sim x} b_n \Lambda(n) \sum_{x^{1/3-2\gamma} <  k \leq x^{1/3+\gamma} }\frac{\Lambda(k)g(k)}{\log (x^{2/3-\gamma}/ k)} =: D(\gamma) \sum_{n \sim x} b_n \Lambda(n).
\end{align*}
By the Prime number theorem we have (denoting $k=x^\alpha$)
\begin{align*}
D(\gamma) &\sim  2 \sum_{ x^{1/3-2\gamma} <  k \leq x^{1/3+\gamma} } \frac{\Lambda(k)}{k \log (x^{2/3-\gamma}/k)} \\
& \sim 2 \sum_{ x^{1/3-2\gamma} <  k \leq x^{1/3+\gamma} } \frac{1}{k \log (x^{2/3-\gamma}/k)} \sim 2 \int_{1/3-2\gamma}^{1/3+\gamma} \frac{d\alpha}{2/3-\gamma - \alpha} \\ &\sim 2 \log \frac{1+3 \gamma}{1-6\gamma}.
\end{align*}
We have $D(1/24) < 0.811$. Since $\varepsilon > 0$ can be taken to be arbitrarily small, this implies
\begin{align*}
S_{222} \leq (0.811 + o(1))\cdot \sum_{n \sim x} b_n \Lambda(n),
\end{align*}
completing the proof of Theorem \ref{maintheorem}. \qed
\subsection{Proof of the upper bound result}
We now explain how to get the upper bound result in Theorem \ref{maintheorem}. By Section \ref{s1section} we have by negativity of $S_2$
\begin{align*}
\sum_{n \sim x} a_n \Lambda(n) &\leq S_1 + O_C(x^{5/8}/\log C  x) = \sum_{\substack{n \sim x }} b_{n} \lambda'(n) 1_{(n,P(z))=1}  + O_C(x^{5/8}/\log C  x) \\
&= \sum_{\substack{n \sim x }} b_{n} \Lambda(n) 1_{(n,P(z))=1} + M_2  + O_C(x^{5/8}/\log C  x), 
\end{align*}
where by reversing the initial decomposition on the $b_n$-side (Section \ref{initialsection})
\[
M_2 := \sum_{\substack{km \sim x \\ k,m \geq z}} b_{km} \Lambda(k)\lambda(m)  1_{(m,P(z))=1}
\]
which is the same as  $S_2$ but with $a_n$ replaced by $b_n$. Now $M_2$ can be bounded similarly as $S_2$, except that we decompose with $\gamma=0$ to get $M_{2}=M_{21}+M_{23}$ with
\begin{align*}
M_{21}:=\sum_{\substack{km \sim x \\k > x^{1/3} \\ m \geq z}} b_{km} \Lambda(k) \lambda(m)  1_{(m,P(z))=1} \\
M_{23}:= \sum_{\substack{km \sim x \\   k \leq x^{1/3} \\ m \geq z}} b_{km} \Lambda(k) \lambda(m)  1_{(m,P(z))=1}.
\end{align*}
By similar arguments as above for $S_{21},S_{23}$ we get 
\[
M_{21} + M_{23} \ll_C x^{5/8} L(1,\chi) \log^5 x + x^{5/8}/\log^C x,
\]
since for $b_n$ we have an exponent of distribution $>2/3$ by Propositions \ref{typei1prop} and \ref{typei2prop}. That is, to prove the upper bound we only needed that $a_n$ has an exponent of distribution $1/2+\eps$ instead of $5/8-\eps$.

\section{Type I sums}
\label{typeisection}
In this section we will prove Propositions \ref{typei1prop} and \ref{typei2prop}. The arguments are straightforward generalizations of the arguments in \cite{fidivisor} and \cite[Section 14]{fiillusory}. Since it does not require much additional effort, we give the arguments in this section for the sequences $a^2+b^{2k}$ for any $k \geq 1$, which yields the exponent of distribution $1/2+1/(2k)-\varepsilon$, as claimed in \cite[below Theorem 4]{fidivisor}.

For the arguments in this section it is convenient for us to define $\pprec$ to mean an inequality modulo logarithmic factors, that is, for two functions $f$ and $g$ with $g \geq 0$ we write $f(N) \pprec g(N)$ if $f(N) \ll g(N) \log^{O(1)} N.$ For parameters such as $\varepsilon$ we write  $f(N) \pprec_\varepsilon g(N)$ to mean $f(N) \ll_\varepsilon g(N) \log^{O_\varepsilon(1)} N.$

Proposition \ref{typei1prop} is a consequence of the following proposition, which we will prove in this section.
\begin{prop} \label{typeidyadicprop} Let $M,L, D \gg 1$. Let $k \geq 1$ integer and let $\lambda_\ell$ be a coefficient such that  $|\lambda_\ell|  \leq 1_{\ell=n^k}$. Let $\psi$ denote a fixed $C^\infty$-smooth compactly supported function and denote $\psi_M(x):=\psi(x/M)$. Then for any divisor bounded $\alpha(d)$ and any real number $m_0 \pprec M$ we have 
\begin{align*}
\sum_{d \sim D} \alpha(d) & \bigg( \sum_{\substack{
(\ell,m)=1 \\ \ell \sim L \\ \ell^2+m^2 \equiv 0 \, (d) }} \lambda_\ell \psi_M(m-m_0) - \int \psi_M(t) \,dt \frac{\varrho(d)}{d} \sum_{\substack{
(\ell,d)=1 \\ \ell \sim L}}\lambda_\ell \frac{\varphi(\ell)}{\ell} \bigg)  \\
&\pprec_\varepsilon  M^\varepsilon (L+M)^{1/2} D^{1/2} L^{1/(2k)} .
\end{align*}
\end{prop}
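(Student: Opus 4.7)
The plan is to adapt the Friedlander--Iwaniec Type I estimate for $a^2+b^4$ \cite{fidivisor} (the case $k=2$) uniformly in $k$. Since $\lambda_\ell$ is supported on $k$-th powers, I first rewrite the inner sum over $\ell \sim L$ as a sum over $n \sim N := L^{1/k}$ with $\ell = n^k$, and remove the coprimality condition $(\ell,m)=(n,m)=1$ by Möbius inversion. For each squarefree $d$ with $(\ell,d)=1$, the congruence $\ell^2 + m^2 \equiv 0 \pmod d$ factors through the square-roots of $-1$ mod $d$: it is equivalent to $m \equiv \nu\ell \pmod d$ for one of the $\varrho(d)$ residues $\nu$ with $\nu^2 \equiv -1 \pmod d$.

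Next, I would apply Poisson summation to the inner sum over $m$ in each arithmetic progression mod $d$:
\[
\sum_{m \equiv \nu\ell \, (d)} \psi_M(m - m_0) \;=\; \frac{M}{d}\sum_{h\in\Z}\hat\psi(hM/d)\,e_d\bigl(h(\nu\ell - m_0)\bigr).
\]
The zero frequency $h=0$, summed over $\nu$ (multiplicity $\varrho(d)$) and over $\ell$ with the coprimality factor $\varphi(\ell)/\ell$ produced by unwinding the Möbius inversion, reconstructs exactly the main term in the proposition. Rapid decay of $\hat\psi$ truncates the non-zero frequencies to $|h| \ll (D/M) M^\eps$, leaving the error term
\[
E \;=\; \sum_{d\sim D}\alpha(d)\sum_{0 < |h| \ll DM^{\eps-1}} \frac{M\hat\psi(hM/d)}{d}\sum_{\nu^2 \equiv -1 \, (d)} e_d(-hm_0) \sum_{n \sim N} e_d(h\nu n^k).
\]

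To bound $E$ I would follow \cite{fidivisor}: apply Cauchy--Schwarz in the outer variables so as to isolate the inner sum $\sum_n e_d(h\nu n^k)$, open the resulting square, and analyze the diagonal ($n_1 = n_2$) and off-diagonal ($n_1 \neq n_2$) contributions separately. The diagonal produces the trivial count $N = L^{1/k}$ per fixed $(d,h,\nu)$, and when combined with the $L^2$ norm $\sum_d |\alpha(d)|^2 \pprec D$ and the ranges of the $h,\nu$ summations, this recovers the asserted bound $(L+M)^{1/2} D^{1/2} L^{1/(2k)} M^\eps$, with the factor $L^{1/(2k)} = N^{1/2}$ arising as the square-root of the $n$-sum length. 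The off-diagonal contribution is handled by reparametrizing $\nu$ via the Gaussian factorization $d = \alpha \bar\alpha$ (so that $\nu \equiv \bar\alpha \alpha^{-1} \pmod d$), which turns $e_d(h\nu \,\cdot\,)$ into a Kloosterman-type fraction that can be bounded by the Weil estimate.

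The main obstacle I anticipate is the uniform-in-$k$ treatment of the off-diagonal exponential sums: the polynomial $n_1^k - n_2^k$ has degree $k$ and, after factoring out $(n_1-n_2)$, reduces to a symmetric polynomial of degree $k-1$. Generalizing the Weil-type bounds from the $k=2$ case requires verifying absolute irreducibility of the resulting variety and keeping the $k$-dependence polynomial. Since \cite{fidivisor} sets up the framework for $k=2$ in a way that does not rely on the specific degree, the generalization should go through with only bookkeeping adjustments absorbed into the implied constants and the $M^\eps$ factor. Once the off-diagonal contribution is dominated by the diagonal one, the stated bound follows, and Proposition \ref{typei1prop} is then deduced by a dyadic decomposition of the $(d, m)$ variables together with the finer-than-dyadic decomposition used in Section \ref{s1section}.
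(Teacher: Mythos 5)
Your opening moves (reduce to $n\sim N=L^{1/k}$, Möbius to remove $(\ell,m)=1$, translate $\ell^2+m^2\equiv 0\ (d)$ into $m\equiv\nu\ell\ (d)$, Poisson on $m$, read off the main term from $h=0$) match the paper exactly. The divergence happens after Poisson, and it matters.

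You propose to expose the polynomial structure: Cauchy--Schwarz to isolate $\sum_n e_d(h\nu n^k)$, open the square, take the diagonal for the main contribution, and control the off-diagonal via Weil estimates after re-parametrizing $\nu$ through the Gaussian factorization $d=\alpha\bar\alpha$. This is indeed closer in spirit to parts of \cite{fidivisor}, but the paper does something cleaner: it never unpacks $n^k$. After Cauchy--Schwarz (in $d$ only), it invokes the ready-made large sieve inequality for square roots of $-1$ modulo $d$ (Lemma \ref{largesieve}, quoted from \cite[Lemma 14.4]{fiillusory}),
\[
\sum_{\substack{d \sim D \\ (d,q)=1}} \sum_{\nu^2+1\equiv 0\,(d)}\Big|\sum_{j\leq J}\alpha_j e_d(\nu j\bar q)\Big|^2 \ll (Dq+J)\sum_{j\leq J}|\alpha_j|^2,
\]
applied to the combined variable $j=\ell h$. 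The factor $L^{1/(2k)}$ then drops out of $\bigl(\sum_j|\sum_{j=\ell h}\lambda_\ell|^2\bigr)^{1/2}\pprec (H L^{1/k})^{1/2}$, purely from the sparse support of $\lambda_\ell$; no diagonal/off-diagonal split, no Weil bound, no irreducibility of $(n_1^k-n_2^k)/(n_1-n_2)$, and in particular no degradation with $k$. The whole difficulty is packed into Lemma \ref{largesieve}, which is proved via spacing of the fractions $\nu/d$ (coming from the Gaussian-integer parametrization you mention), not complete exponential sums.

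The gap in your plan is precisely where you wave at ``bookkeeping'': the off-diagonal estimate. After opening the Cauchy--Schwarz square you must show that $\sum_{n_1\neq n_2}\big|\sum_{d,h,\nu}e_d(h\nu(n_1^k-n_2^k))\big|$ is dominated by the diagonal $NDH$. The Weil bound for a fixed modulus gives a $\sqrt{d}\sim\sqrt D$ saving, so on its face you would need $N\ll\sqrt D$, i.e. $L\ll D^{k/2}$, which fails in the relevant range for small $k$. Getting around that requires exploiting additional averaging over $d,h,\nu$, keeping the $k$-dependence polynomial, and dealing with the incompleteness of the $n$-sum --- all nontrivial work that \cite{fidivisor} carries out only for $k=2$ (and even there it is the most technical part of the paper). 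Saying it ``should go through with bookkeeping'' is not a proof, and the uniformity-in-$k$ question you yourself raise is exactly where the argument would stall. If you want to follow the paper, replace the diagonal/off-diagonal plus Weil strategy by the large sieve Lemma \ref{largesieve} applied to the bilinear form in $j=\ell h$; then the $k$-dependence is invisible, and the bound closes immediately.

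A small side point: in your parametrization $\nu\equiv\bar\alpha\alpha^{-1}$ you are conflating complex conjugation with modular inversion. The Gaussian factorization gives $d=\alpha\bar\alpha$ with $\bar\alpha$ the complex conjugate, and the root is $\nu\equiv\bar a/a\pmod d$ if $\alpha=a+bi$; the modular inverse is a separate object. Worth untangling if you do pursue that route.

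Finally, you do not address the $b_n$-side of Proposition \ref{typei1prop} or the main-term bookkeeping (\ref{typeimainterm}); in the paper these require applying the same dyadic estimate with $k=1$ and a renormalized $\lambda^{(2)}_\ell=k^{-1}\ell^{-1+1/k}$, plus a short computation relating $\frac{\varrho(d)}{d}\sum_{(\ell,d)=1}\lambda_\ell\varphi(\ell)/\ell$ to $g(d)$. None of that is in your plan, though it is mostly routine once Proposition \ref{typeidyadicprop} is in hand.
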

\emph{Proof of Proposition \ref{typei1prop} assuming Proposition \ref{typeidyadicprop}}. For the sequence $b_n$, which counts $n=a^2+b^2$ weighted with $b^{-1+1/k}/k$, we will apply similar arguments as below but with $k=1$, renormalizing the corresponding $\lambda_\ell$ appropriately.
For $a_n$ which counts $n=a^2+b^8$ we write $m=a$ and $\ell=b^4$, so that we are applying the above proposition with $k=4$.  Similarly as with the treatment of the sum $S_{11}$, we use a finer-than-dyadic decomposition to remove the cross-condition $m^2+\ell^2 \sim x$ that is, writing $\Delta = \log^{-B} x$ for some large $B$, we partition the sum into $\ll \Delta^{-2} \log^{2} x$ parts  where $\ell \in [L_0,L_0(1 + \Delta)]$ and $m \in [M_0, M_0(1+\Delta)]$ with $L_0^2+ M_0^2 \sim x$ and $L_0, M_0 \ll \sqrt{x}$. In fact, we need to refine this decomposition so that for $m$ we use a $C^{\infty}$-smooth finer-than-dyadic partition of unity. Then the resulting coefficients for $m$ are  $C^{\infty}$-smooth functions of the form $\psi_M(m-M_0)$, where $M= M_0 \Delta$ is the width of the window around $M_0 \ll \sqrt{x}$. We can now drop the condition $\ell^2+m^2 \sim x$, with an error contribution bounded by $x^{5/8} \log^{-B +O(1)} x$ coming from the edges (where $L_0^2+M_0^2$ is in $[x(1+\Delta)^{-2}, x(1+\Delta)^2]$ or $[2x(1+\Delta)^{-2}, 2x(1+\Delta)^2]$). To see this, note that we have by Proposition \ref{typeidyadicprop}  using $M_0,L_0 \ll x^{1/2}$
\begin{align*}
 \sum_{d \sim D} |\alpha(d)| \sum_{\substack{m  \\ \ell \in [L_0, L_0(1+\Delta)] \\ m^2+\ell \equiv 0 \, (d)}} \lambda_\ell \psi_{\Delta M_0}( m- M_0) 
&\ll_C x^{5/8}\log ^{-C} x + \Delta^{1+1/k} L_0^{1/k} M_0 \sum_{d \sim D} \frac{|\alpha(d)|\varrho(d)}{d} \\
& \ll_C x^{5/8} \log^{-C} x + x^{5/8} \log^{- (1+1/k) B +O(1)} x,
\end{align*}
and that the number of edge cases is $\ll \log^{B+O(1)} x$, so that we save a factor of $\log^{O(1)-B/k} x$, which is sufficient for $B \gg k$.

We can now apply Proposition \ref{typeidyadicprop} in  each of the parts separately. Note that the we have $L, M \ll x^{1/2}$ and $D \ll x^{5/8-\varepsilon}$, so that the error term is bounded by $x^{5/8-\varepsilon/4}$. To remove the condition $(\ell^2+m^2, q)=1$ implicit in Proposition \ref{typei1prop} we may expand using the M\"obius function to get
\begin{align*}
\sum_{\substack{
\ell^2+m^2 \equiv 0 \, (d) \\ (\ell^2+m^2, q)=1}} = \sum_{f | q} \mu(f) \sum_{\substack{
\ell^2+m^2 \equiv 0 \, (df) }}
\end{align*}
since $(d,q)=1$, and apply Proposition \ref{typeidyadicprop} with level $x^{5/8-\varepsilon} q \ll x^{5/8-\varepsilon/2}$.

Denote $\lambda^{(1)}_\ell=1_{\ell=n^k}$ and  $\lambda^{(2)}_\ell =k^{-1}\ell^{-1+1/k}$.  Let $\tilde{g}(d)$ extend $g(d)$ to $(d,q)>1$, that is,
\[
\tilde{g}(p^k) :=\frac{\varrho(p^k)}{p^k} \bigg(1+ \frac{1}{p}\bigg)^{-1}.
\]
We still have to evaluate the main term in Proposition \ref{typeidyadicprop} to get (\ref{typeimainterm}). Recombining the finer-than-dyadic decomposition to a dyadic one for the variable $\ell$, this follows we once show that for $j\in\{1,2\}$
\[
\sum_{d \sim D} \alpha(d)  \int \psi_M(t) \,dt \frac{\varrho(d)}{d} \sum_{\substack{
(\ell,d)=1 \\ \ell \sim L}}\lambda^{(j)}_\ell \frac{\varphi(\ell)}{\ell} =   \sum_{d \sim D} \alpha(d)  \tilde{g}(d) \sum_{\substack{
(\ell,m)=1 \\ \ell \sim L }} \lambda^{(2)}_\ell \psi_M(m-m_0) +O(x^{5/8-\eta}),
\]
which follows easily once we show that
\begin{equation} \label{claim1mainterm}
\begin{split}
& \sum_{d \sim D} \alpha(d)  \int \psi_M(t) \,dt \frac{\varrho(d)}{d} \sum_{\substack{
(\ell,d)=1 \\ \ell \sim L}}\lambda_\ell^{(j)} \frac{\varphi(\ell)}{\ell} \\
&= \sum_{d \sim D} \alpha(d)   \frac{\varrho(d)}{d} \frac{\varphi(d)}{d} \prod_{p|d}(1-p^{-2})^{-1}  \frac{1}{\zeta(2)}  \sum_{\substack{
m \\ \ell \sim L }} \lambda^{(2)}_\ell \psi_M(m-m_0) +O(x^{5/8-\eta}).
\end{split}
\end{equation}
Define
 \begin{align*}
 H_d := \prod_{p \,\nmid d\, } (1-p^{-2}) =   \sum_{(c,d)=1} \frac{\mu(c)}{c^2} = \frac{1}{\zeta(2)} \prod_{p |d } (1-p^{-2})^{-1}
 \end{align*}
and note that
 \begin{align*}
\sum_{\ell \sim L} \lambda^{(1)}_\ell = (1+L^{-\varepsilon_k})\sum_{\ell \sim L} \lambda^{(2)}_\ell
\end{align*}
and
\[
\int \psi_M(t) \,dt = \sum_{m} \psi_M(m-m_0) + O_C(M^{-C}).
\]
Then, since $M \pprec x^{1/2}$, the claim (\ref{claim1mainterm}) follows once we show
\[
\sum_{d \leq D} \frac{\alpha(d) \varrho(d)}{d} \bigg( \sum_{\substack{
(\ell,d)=1 \\ \ell \sim L}}\lambda^{(j)}_\ell \frac{\varphi(\ell)}{\ell} - \frac{\varphi(d)}{d}H_d\sum_{\substack{
 \ell \sim L}}\lambda^{(j)}_\ell \bigg) \pprec 1.
\]
To show this, note also that
 \begin{align*}
 \frac{\varphi(\ell)}{\ell} = \sum_{c| \ell} \frac{\mu(c)}{c}.
 \end{align*}
Then for $\lambda_\ell=1_{\ell=n^k}$ (and similarly for $\lambda_\ell =k^{-1}\ell^{-1+1/k})$
\begin{align*}
\sum_{d \leq D} \frac{\alpha(d) \varrho(d)}{d} &\bigg( \sum_{\substack{
(\ell,d)=1 \\ \ell \sim L}}\lambda_\ell \frac{\varphi(\ell)}{\ell} - \frac{\varphi(d)}{d}H_d\sum_{\substack{
 \ell \sim L}}\lambda_\ell \bigg) \\
 &= \sum_{d \leq D} \frac{\alpha(d) \varrho(d)}{d} \sum_{(c,d)=1} \frac{\mu(c)}{c}\bigg( \sum_{\substack{
(\ell,d)=1 \\ \ell \sim L/c}}\lambda_{c\ell}  - \frac{\varphi(d)}{cd}\sum_{\substack{
 \ell \sim L}}\lambda_\ell \bigg) 
 \\ &= \sum_{d \leq D} \frac{\alpha(d) \rho(d)}{d} \sum_{(c,d)=1} \frac{\mu(c)}{c} \sum_{e|d} \mu(e) \bigg( \sum_{\substack{ \ell \sim L/ce}}\lambda_{ce\ell}  - \frac{1}{ce}\sum_{\substack{
 \ell \sim L}}\lambda_\ell \bigg)  \\
 &=  \sum_{d \leq D} \frac{\alpha(d) \rho(d)}{d} \sum_{(c,d)=1} \frac{\mu(c)}{c} \sum_{e|d} \mu(e) \bigg( \sum_{\substack{ n \sim L^{1/k}/ce}}1  - \frac{1}{ce}\sum_{\substack{n \sim L^{1/k}}}1 \bigg)  \\
& \ll \sum_{d \leq D} \frac{|\alpha(d) | \rho(d) }{d} \sum_{e|d}  \bigg(\sum_{c \ll L^{1/k}/e}  \frac{1}{c}  + \frac{L^{1/k}}{e} \sum_{c \gg L^{1/k}/e} \frac{1}{c^2}\bigg)\pprec 1
\end{align*}
by writing $\ell=(nce)^k$ since $ce$ is square free. 
\qed

Proposition \ref{typei2prop} follows by a similar argument from the following (recall that $a_n$ and $b_n$ are supported on $(n,q)=1$). 
\begin{prop} \label{typei2dyadicprop}
Let $M,L, D \gg 1$. Let $k \geq 1$ integer and let  $\lambda_\ell$ be  a coefficient such that  $|\lambda_\ell|  \leq1_{\ell=n^k}$. Let $\psi$ denote a fixed $C^\infty$-smooth compactly supported function and denote $\psi_M(x):=\psi(x/M)$. Let $\chi$ denote a primitive quadratic Dirichlet character associated to a fundamental discriminant $ \pm q$ with $q > 1$. Then for any divisor bounded $\alpha(d)$ and any real number $m_0 \pprec M$ we have 
\begin{align*}
\sum_{d \sim D} \alpha(d)   & \sum_{\substack{
(\ell,m)=1 \\ \ell \sim L \\ \ell^2+m^2 \equiv 0 \, (d) }} \lambda_\ell  \psi_M(m-m_0)\chi(\ell^2+m^2)  \\ 
&\pprec_\varepsilon q^2 M^\varepsilon    (L+M)^{1/2} D^{1/2} L^{1/(2k)}  + q^{-\eta} M L^{1/k}.
\end{align*}
\end{prop}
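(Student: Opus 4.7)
The plan is to mirror the proof of Proposition \ref{typeidyadicprop}, with additional work to extract cancellation from the character twist $\chi(\ell^2+m^2)$. The support conditions on $a_n$ and $b_n$ force $(n,q)=1$, so I may assume $(d,q)=1$ throughout, and in particular the moduli $d$ and $q$ behave independently.

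The first step is to decompose both summation variables modulo $q$: write $\ell \equiv \ell_1 \,(q)$ and $m \equiv m_1 \,(q)$. Since $(d,q)=1$, the joint conditions $\ell^2+m^2 \equiv 0 \,(d)$ and $\ell \equiv \ell_1, \, m \equiv m_1 \,(q)$ combine into single congruences modulo $dq$. The key benefit is that within each residue class $\chi(\ell^2+m^2)$ reduces to the constant $\chi(\ell_1^2+m_1^2)$, which can be pulled outside the inner sum. The support restriction $\lambda_\ell = 1_{\ell=n^k}$ is compatible with this decomposition provided I also split $n$ modulo $q$, so that $\ell=n^k$ lies in a single residue class modulo $q$.

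Next, in each of the $q^2$ residue classes I would apply the Poisson summation / lattice analysis that yields Proposition \ref{typeidyadicprop}, now with effective modulus $dq$ and smooth mass $M/q$, extracting in each class an expected main term plus an error term. Summing the errors over all $q^2$ classes and repeating the Cauchy-Schwarz manipulations of Proposition \ref{typeidyadicprop}, with the extra factor $q$ in the modulus carried through the bookkeeping, yields a contribution of size $\pprec_\varepsilon q^2 M^{\varepsilon} (L+M)^{1/2} D^{1/2} L^{1/(2k)}$, matching the first term of the claimed bound. No cancellation from $\chi$ is used in this part; the factor $q^2$ simply reflects the brute-force summation over residue classes.

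The sum of the main terms over the $q^2$ classes reduces (after arithmetic simplification) to an expression proportional to the complete character sum
\begin{align*}
\sum_{\ell_1, m_1 \,(q)} \chi(\ell_1^2 + m_1^2),
\end{align*}
which is a classical Jacobsthal-type sum. Factoring $q$ into prime powers and applying Weil's bound on each factor shows this sum is $\ll q^{1+\varepsilon-\eta}$ for some absolute $\eta>0$, giving a net contribution $\ll q^{-\eta} M L^{1/k}$, the second term of the claimed bound. Degenerate residue classes with $(\ell_1 m_1, q) > 1$, together with the tails of the Poisson expansions, contribute negligibly and are absorbed. The main obstacle in the argument is careful tracking of the $q$-dependence through the Poisson and Cauchy-Schwarz steps so that only $q^2$ (rather than a larger power) is lost in the residue-class-summed error; this should follow the pattern in \cite{fidivisor} and \cite[Section 14]{fiillusory}.
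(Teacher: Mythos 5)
Your high-level outline shares some structure with the paper's actual proof: both Poisson-sum the $m$-variable modulo $dq$, both bound the nonzero frequencies by brute force (paying $q^2$) via Cauchy--Schwarz and the large sieve of Lemma~\ref{largesieve}, and both extract the $q$-saving from cancellation of $\chi$ in the zero-frequency/main term. However, there are two substantive gaps in your argument. First, the reduction of the summed main terms to the complete Jacobsthal sum $\sum_{\ell_1,m_1\,(q)}\chi(\ell_1^2+m_1^2)$ is not justified and is in fact the wrong object. After Poisson with modulus $dq$, the zero-frequency term in each class carries weights that depend on the residue class through much more than $\chi(\ell_1^2+m_1^2)$: the density of $n$ with $n^k\equiv\ell_1\,(q)$ is not uniform across classes, the $(\ell,m)=1$ Euler factor changes when $\gcd(\ell_1,q)>1$, and crucially the dependence on $d$ cannot be factored out. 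The paper avoids all of this by parametrizing the $q$-part of $m$ as $m\equiv\beta(\ell/q_\ell)\,(q)$ rather than by a raw residue $m_1$; then the character collapses to $\chi(\beta^2 d^2+q_\ell^2)$, and the cancellation is captured by the \emph{one}-variable Weil bound of Lemma~\ref{charactersumlemma}, $\sum_{\beta\,(q)}\chi(\beta^2 d^2+q_\ell^2)\ll(q,q_\ell^2)^{1/2}q^{1/2+\varepsilon}$, which correctly tracks the $d$-dependence.

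Second, and more seriously, you dismiss the classes with $\gcd(\ell_1,q)>1$ as "negligible and absorbed" without argument. This is precisely the gap the paper explicitly identifies in the original Friedlander--Iwaniec treatment: when $\ell=n^k$ has a large factor $q_0$ of $n$ dividing $q$ (so $q_\ell=q_0^k$ shares a large factor with $q$), the Weil bound loses its square-root saving because $(q,q_\ell^2)$ can be comparable to $q$, and the corresponding residue classes in your parametrization do not exhibit the Jacobsthal-type cancellation you rely on (for instance, $\ell_1\equiv0\,(p)$, $m_1\not\equiv0\,(p)$ gives $\chi$ evaluated at a nonzero square, contributing with no cancellation). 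The paper handles this at the outset by a trivial divisor-sum estimate: the contribution from $\ell$ with $q_0(\ell)>q^\eta$ is bounded by $\pprec q^{-\eta/4}ML^{1/k}$ directly, \emph{before} any Poisson summation. This preliminary removal, together with the one-variable Weil estimate on the remaining $q_0\leq q^\eta$ part, is where the $q^{-\eta}ML^{1/k}$ term in the statement actually comes from; attributing it to a two-variable Jacobsthal sum is not correct, and without the preliminary removal your main-term analysis would fail for those classes.
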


For the proof of Propositions \ref{typeidyadicprop} and \ref{typei2dyadicprop} we need the following large sieve inequality (see \cite[Lemma 14.4]{fiillusory} for the proof).
\begin{lemma} \label{largesieve} Let $q \geq 1$. Then for any complex numbers $\alpha_n$ we have
\begin{align*}
\sum_{\substack{d \sim D \\ (d,q)=1}} \sum_{\nu^2+1 \equiv 0 \, (d)} \bigg| \sum_{n \leq N} \alpha_n e_d(\nu n \bar{q})\bigg| \ll (Dq + N) \sum_{n \leq N} |\alpha_n|^2,
\end{align*}
where $q\bar{q}  \equiv 1 \, (d)$.
\end{lemma}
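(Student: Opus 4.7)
The plan is to reduce the claimed inequality to the classical large sieve for the quadratic congruence $\nu^2+1 \equiv 0 \pmod d$ (of the form used by Friedlander--Iwaniec in their work on $a^2+b^4$), after eliminating the twist by $\bar q$. The crucial observation is that $q\bar q \equiv 1 \pmod d$, so the factor $\bar q$ inside the exponential can be absorbed by decomposing the sum over $n$ into residue classes modulo $q$, at the cost of a factor of $q$ from Cauchy--Schwarz. (I read the stated inequality as having $|\cdot|^2$ on the left; this is the only way the bound is dimensionally consistent with $\sum|\alpha_n|^2$ on the right, and is the shape of the result quoted from \cite[Lemma 14.4]{fiillusory}.)

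First, I would split $n \in [1,N]$ according to its residue modulo $q$. Writing $n=qn'+r$ with $0 \leq r < q$, the identity $q\bar q \equiv 1 \pmod d$ gives
\begin{equation*}
e_d(\nu n \bar q) \;=\; e_d\bigl(\nu(qn'+r)\bar q\bigr) \;=\; e_d(\nu n')\, e_d(\nu r \bar q),
\end{equation*}
so that
\begin{equation*}
\sum_{n \leq N} \alpha_n\, e_d(\nu n \bar q) \;=\; \sum_{r=0}^{q-1} e_d(\nu r \bar q) \sum_{\substack{n' \geq 0 \\ qn'+r \leq N}} \alpha_{qn'+r}\, e_d(\nu n').
\end{equation*}
Applying Cauchy--Schwarz in the variable $r$ (treating $e_d(\nu r \bar q)$ as a unit-modulus weight) yields
\begin{equation*}
\Bigl| \sum_{n \leq N} \alpha_n\, e_d(\nu n \bar q) \Bigr|^2 \;\leq\; q \sum_{r=0}^{q-1} \Bigl| \sum_{n'} \alpha_{qn'+r}\, e_d(\nu n') \Bigr|^2.
\end{equation*}

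Next, I would sum over $d \sim D$ with $(d,q)=1$ and $\nu^2+1 \equiv 0 \pmod d$. For each fixed $r$ the inner sum is now untwisted, and the classical Iwaniec-type large sieve for the Gaussian quadratic congruence
\begin{equation*}
\sum_{d \sim D}\; \sum_{\nu^2+1 \equiv 0\,(d)} \Bigl| \sum_{n' \leq N/q} \beta_{n'}\, e_d(\nu n') \Bigr|^2 \;\ll\; (D + N/q) \sum_{n'} |\beta_{n'}|^2
\end{equation*}
may be applied with $\beta_{n'} = \alpha_{qn'+r}$. Summing over $r \in \{0,\dots,q-1\}$ and using $\sum_{r} \sum_{n'} |\alpha_{qn'+r}|^2 = \sum_n |\alpha_n|^2$ gives the total bound
\begin{equation*}
q \cdot (D + N/q) \sum_n |\alpha_n|^2 \;=\; (Dq + N) \sum_n |\alpha_n|^2,
\end{equation*}
which is exactly the desired inequality.

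The only nontrivial input is the classical Gaussian-integer large sieve quoted above, whose proof rests on the well-spacing of order $1/D$ of the fractions $\nu/d$ arising from the factorization $d = a^2+b^2$; this is a standard result and can be taken off the shelf. The main conceptual step of the present proof is the residue decomposition in $n$ modulo $q$, which is designed precisely to let $q\bar q \equiv 1 \pmod d$ collapse $e_d(\nu n \bar q)$ to $e_d(\nu n')$ at the cost of exactly the factor $q$ appearing in $Dq + N$.
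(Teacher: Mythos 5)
Your proof is correct, and your reading of the statement is the right one: the left-hand side must carry $|\cdot|^2$ (the printed absolute value without the square is a typo, as one sees both from the right-hand side $\sum_n|\alpha_n|^2$ and from the way the lemma is applied after Cauchy--Schwarz in Section \ref{typeisection}). Note, however, that the paper does not prove this lemma at all: it is quoted directly from \cite[Lemma 14.4]{fiillusory}, where the twist by $\bar q$ is built into the statement and handled at the level of the spacing of the fractions $\nu\bar q/d$ modulo $1$ (these are roots of $q^2x^2+1\equiv 0\ (d)$ and are $\gg 1/(qD)$-spaced, which is where the $Dq$ term comes from). Your argument takes a genuinely different, and arguably cleaner, route: splitting $n$ into residue classes modulo $q$ so that $q\bar q\equiv 1\ (d)$ collapses the twist, paying a factor $q$ by Cauchy--Schwarz in the $q$ classes, and then invoking only the classical untwisted Gaussian large sieve $\sum_{d\le D}\sum_{\nu^2+1\equiv 0\,(d)}|\sum_{n'\le N/q}\beta_{n'}e_d(\nu n')|^2\ll (D+N/q)\sum|\beta_{n'}|^2$, which recombines to exactly $(Dq+N)\sum_n|\alpha_n|^2$. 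The trade-off is that your proof is modular (it needs only the standard untwisted inequality as a black box, and the bookkeeping with the shorter ranges $n'\le N/q$ and the partition $\sum_r\sum_{n'}|\alpha_{qn'+r}|^2=\sum_n|\alpha_n|^2$ is done correctly), while the direct spacing argument of Friedlander--Iwaniec yields the twisted statement in one step; both ultimately rest on the same well-spacing of the roots $\nu/d$ coming from writing $d=r^2+s^2$.
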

We also require the Poisson summation formula.
\begin{lemma}\emph{\textbf{(Truncated Poisson summation formula).}} \label{poisson}
Let $\psi:\R\to  \C$ be a fixed $C^\infty$-smooth compactly supported function with $\|\psi\|_{1} \leq 1$ and let $M \gg 1$. Fix a real number $m_0$. Let $d \geq 1$ be an integer. Then for any $\varepsilon > 0$ we have uniformly in $m_0$
\begin{align*}
\sum_{m \equiv a \, (d)} \psi_M(m-m_0 )  = \int \sum_{0 \leq |h| \leq M^\varepsilon d/M} \psi_M(t d-m_0)e(ht)e_d (-ah) dt + O_{C,\varepsilon}(M^{-C}).
\end{align*}
\end{lemma}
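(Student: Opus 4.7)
The plan is to apply the classical Poisson summation formula after unfolding the arithmetic progression to $\Z$, then truncate the resulting dual sum using the rapid decay of $\hat\psi$. First I would write every $m \equiv a\,(d)$ uniquely as $m = a + dn$ with $n \in \Z$, so that
\begin{align*}
\sum_{m \equiv a\,(d)} \psi_M(m - m_0) = \sum_{n \in \Z} g(n), \qquad g(x) := \psi_M(dx + a - m_0).
\end{align*}
Since $\psi$ is $C^\infty$ and compactly supported, $g$ is Schwartz and Poisson summation applies without subtlety: $\sum_{n \in \Z} g(n) = \sum_{h \in \Z} \hat g(h)$, with $\hat g(h) = \int g(x)\, e(-hx)\,dx$.

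Next I would compute $\hat g(h)$ via two elementary linear substitutions (first $y = dx$ to unfold the modulus $d$, then $u = y + a - m_0$ to absorb the shift $a - m_0$). This yields
\begin{align*}
\hat g(h) = \frac{M}{d}\, e_d(ah)\, e(-h m_0/d)\, \hat\psi(Mh/d).
\end{align*}
Reindexing $h \mapsto -h$ and undoing the substitution as $u = td - m_0$ re-expresses the Fourier coefficient as
\begin{align*}
\hat g(-h) = \int \psi_M(td - m_0)\, e(ht)\, e_d(-ah)\,dt,
\end{align*}
matching exactly the integrand on the right-hand side. Interchanging sum and integral (legitimate by absolutely convergent Schwartz decay) then produces the claimed identity with the full sum over $h \in \Z$ in place of the truncated one.

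Finally I would truncate the dual sum to $|h| \leq M^\varepsilon d/M$ using the Schwartz bound $|\hat\psi(\xi)| \ll_N (1 + |\xi|)^{-N}$ for every $N \geq 0$. For $|h| > M^\varepsilon d/M$ the argument $Mh/d$ exceeds $M^\varepsilon$ in absolute value, so the tail contributes
\begin{align*}
\sum_{|h| > M^\varepsilon d/M} \bigl|\hat g(-h)\bigr| \ll_N \frac{M}{d} \sum_{|h| > M^\varepsilon d/M} \left(\frac{M|h|}{d}\right)^{-N} \ll M^{-\varepsilon(N-1)},
\end{align*}
which is $O_{C,\varepsilon}(M^{-C})$ upon choosing $N$ large in terms of $C$ and $\varepsilon$. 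Because $m_0$ enters $\hat g(-h)$ only through the unit-modulus phase $e(hm_0/d)$, the estimate is uniform in $m_0$, as required. The only real care needed is the bookkeeping of phases so that $e_d(-ah)$ appears with exactly the sign and normalization in the statement; beyond that the argument is a direct application of Poisson summation combined with Schwartz decay, and presents no analytic obstacle.
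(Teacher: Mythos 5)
Your proposal is correct and follows essentially the same route as the paper: Poisson summation applied to the unfolded progression $m=a+dn$, a change of variables to identify the dual terms with the stated integrals, and truncation of the tail at $|h|>M^{\varepsilon}d/M$ using the rapid decay of the Fourier transform (which the paper phrases as iterated integration by parts). No gaps.
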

\begin{proof}
Applying the Poisson summation formula we get
\begin{align*}
\sum_{m \equiv a \, (d)} \psi_M(m-m_0 ) &= \sum_{n} \psi_M(nd+a-m_0) = \sum_{h} \int \psi_M (td+a-m_0)e(ht) d t \\
&=\sum_{h} \int \psi_M(td-m_0)e(ht) e_d(-ha)d u .
\end{align*}
by the change of variables $t \mapsto t -a/d$. For $|h| > M^{\varepsilon}d/M$ we can iterate integration by parts to show that the contribution from this part is $\ll_{C,\varepsilon} M^{-C}.$ 
\end{proof}
We also need the following Weil bound for character sums.
\begin{lemma}
\label{charactersumlemma}
Let $q \geq 1$ and let $\chi$ be a primitive quadratic character of modulus $q$. Let $a, b \in \mathbb{Z}$ and $(a,q)=1$. Then
\[
\sum_{m \,(q)} \chi(am^2+b)) \ll_\eps (b, q)^{1/2} q^{1/2+\eps}.
\]
\end{lemma}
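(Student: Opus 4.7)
The plan is to use the Chinese Remainder Theorem to factor the sum into local sums over prime power divisors of $q$, and then evaluate each local factor essentially by hand, exploiting the fact that $\chi$ and the polynomial $am^2+b$ are both quadratic.

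Since $\chi$ is a primitive quadratic character, its modulus $q$ is a fundamental discriminant, so one can write $q = q_1 \cdot 2^s$ with $q_1$ odd squarefree and $s \in \{0,2,3\}$. By CRT the character factors as $\chi = \prod_{p^{e_p}\,\|\,q} \chi_p$ into primitive local quadratic characters, and correspondingly
\[
\sum_{m\,(q)} \chi(am^2+b) \,=\, \prod_{p^{e_p}\,\|\,q} S_p, \qquad S_p \,:=\, \sum_{m\,(p^{e_p})} \chi_p(am^2+b),
\]
so it suffices to bound each local factor separately.

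For each odd prime $p\mid q$, so that $e_p=1$ and $\chi_p$ is the Legendre symbol, I would complete the square. Using $(a,p)=1$ and setting $c \equiv b\bar a \pmod{p}$, the sum becomes $\bigl(\tfrac{a}{p}\bigr)\sum_{m\,(p)}\bigl(\tfrac{m^2+c}{p}\bigr)$. The inner sum is the classical Jacobsthal-type evaluation equal to $p-1$ when $p\mid c$ (equivalently $p\mid b$) and to $-1$ when $p\nmid c$. Hence $|S_p|\le p-1$ when $p\mid b$ and $|S_p|\le 1$ otherwise; in both cases $|S_p|\le (b,p)^{1/2}\,p^{1/2}$. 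For the $2$-adic factor, since $s\le 3$, the trivial pointwise bound $|S_2|\le 2^s\le 8 = O(1)$ suffices.

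Multiplying the local bounds and using that $q_1$ is squarefree, so that $\prod_{p\mid q_1}(b,p) = (b,q_1)\le (b,q)$, I obtain
\[
\Big|\sum_{m\,(q)} \chi(am^2+b)\Big| \,\ll\, \prod_{p\mid q_1}(b,p)^{1/2}\, p^{1/2} \,\le\, (b,q)^{1/2}\, q^{1/2},
\]
which is in fact slightly stronger than the stated bound $(b,q)^{1/2}\,q^{1/2+\eps}$. I do not foresee any real obstacle: the simultaneous quadratic structure of $\chi$ and of $am^2+b$ renders the whole analysis completely explicit and removes any need for the full Weil bound, and the $q^{\eps}$ in the statement just provides slack that comfortably absorbs the bounded $2$-adic contribution.
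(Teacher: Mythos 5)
Your proof is correct and is genuinely more elementary than what the paper's label \emph{Weil bound} suggests. The paper states the estimate without proof, and the name hints at invoking Weil's general bound for complete character sums (a deep consequence of the Riemann hypothesis for curves over finite fields). Your route avoids that entirely by exploiting the special quadratic structure: since $\chi$ is a primitive real character, its modulus $q$ is a fundamental discriminant, hence $q = q_1 2^s$ with $q_1$ odd squarefree and $s \le 3$; after factoring $\chi = \prod_p \chi_p$ via CRT, each odd local factor $S_p = \sum_{m\,(p)} \chi_p(am^2+b)$ is evaluated \emph{exactly} by completing the square and applying the Jacobsthal identity $\sum_{m\,(p)} \bigl(\tfrac{m^2+c}{p}\bigr) = -1$ if $p\nmid c$ and $p-1$ if $p\mid c$, giving $|S_p| \le (b,p)^{1/2}p^{1/2}$, while the $2$-adic factor is trivially $O(1)$. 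Multiplying out yields $\ll (b,q)^{1/2}q^{1/2}$ with an absolute implied constant, which is in fact sharper than the stated $(b,q)^{1/2}q^{1/2+\eps}$. The gain of your approach is a short, self-contained, explicit argument; the general Weil bound would only be needed for non-quadratic characters or higher-degree polynomials, neither of which arises here.
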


\subsection{Proof of Propositions \ref{typeidyadicprop} and \ref{typei2dyadicprop}}
We first note that there is a gap in the proof given in \cite[Section 14]{fiillusory}, namely, the argument around their application of Poisson summation works only if the sum is restricted to $(\ell,q)=1$. To fix this we must first bound the contribution $\ell = n^k$ which have a large factor whose prime factors divide $q.$ Let $q_0=q_0(n)=q_0(\ell)$ denote the smallest factor of $n$ such that $(n/q_0, q) = 1$. The parts of the sums in Proposition \ref{typei2dyadicprop} where $q_0 > q^\eta$ can be bounded trivially. To see this,  note that by the divisor boundedness $\alpha(d)$ and Lemma \ref{divisorlemma} we have
\begin{align*}
&\sum_{d \sim D} \alpha(d)    \sum_{\substack{
(\ell,m)=1 \\ \ell \sim L \\ \ell^2+m^2 \equiv 0 \, (d) \\ q_0 > q^\eta }} \lambda_\ell  \psi_M(m-m_0)\chi(\ell^2+m^2) \ll\sum_{\substack{
m \asymp m_0  \\ n \sim L^{1/k} \\ q_0 > q^\eta }} \tau(m^2+n^{2k})^{O(1)}  \\
& \ll \sum_{d \ll m_0^{1/2}} \tau(d)^{O(1)} \sum_{\substack{ n \sim L^{1/k}  \\ q_0 > q^\eta}} \sum_{\substack{m \asymp m_0 \\ m^2 \equiv - n^{2k} \, (d) }} 1 \pprec M \sum_{d \ll m_0^{1/2}} \frac{\tau(d)^{O(1)}}{d} \sum_{\substack{ n \sim L^{1/k}  \\ q_0 > q^\eta}} 1 \pprec M \sum_{\substack{ n \sim L^{1/k}  \\ q_0 > q^\eta}} 1 
\end{align*}
and
\begin{align*}
\sum_{\substack{ n \sim L^{1/k} \\ q_0 > q^\eta}} 1  \leq \sum_{\substack{q_0 > q^\eta \\ p| q_0 \Rightarrow p| q}} \sum_{n \sim L^{1/k}/q_0} \ll L^{1/k}  \sum_{\substack{q_0 > q^\eta \\ p| q_0 \Rightarrow p| q}} q_0^{-1} \leq q^{-\eta/2} L^{1/k} \prod_{p| q} ( 1- p^{-1/2})^{-1} \ll  q^{-\eta/4} L^{1/k}.
\end{align*}
Hence, we may assume that $\lambda_\ell$ is supported on $q_0(\ell) < q^{\eta}$ for some small $\eta>0$.

Note that since $d| \ell^2+m^2$, we may add the condition $(d,q)=1$ since otherwise $\chi(\ell^2+m^2)=0$. Expanding the condition $(\ell,m)=1$ using the M\"obius function, we get 
\begin{align*}
\sum_{\substack{d \sim D \\ (d,q)=1}} \alpha(d)   \sum_{\substack{
(\ell,m)=1 \\ \ell \sim L \\ \ell^2+m^2 \equiv 0 \, (d)}}& \lambda_\ell  \psi_M(m-m_0) \chi(\ell^2+m^2) \\
&= \sum_{\substack{b \ll LM\\ (b,q)=1}} \mu(b) \sum_{\substack{d \sim D \\ (d,q)=1}} \alpha(d)   \sum_{\substack{
 \ell \sim L /b \\ (\ell,d)=1}} \lambda_{b\ell}\sum_{\substack{m \\ b^2(\ell^2+m^2) \equiv 0 \, (d) }} \psi_{M/b}(m-m_0/b) \chi(\ell^2+m^2) \\
\end{align*}
Writing $b_1=(d,b)$ and $b_2=b/b_1$ we get (absorbing $(d,b_2)=1$ into the coefficient $\alpha(d)$ and redefining $\alpha(d)$ as $\alpha(b_1d)$)
\[
\sum_{\substack{b_1b_2 \ll LM\\ (b_1b_2,q)=1}} \mu(b_1b_2) \sum_{\substack{d \sim D/b_1 \\ (d,q)=1}} \alpha(d)   \sum_{\substack{
 \ell \sim L /b \\ (\ell,d)=1}} \lambda_{b\ell}\sum_{\substack{m \\ \ell^2+m^2 \equiv 0 \, (d) }} \psi_{M/b}(m-m_0/b) \chi(\ell^2+m^2).
\]
Let $q_\ell:= q_0^k$ so that $(q,\ell/q_\ell) =1$. Defining $\nu\,(d)$ and $\beta \,(q)$ so that $m \equiv \nu \ell \, (d)$ and $m \equiv \beta (\ell/q_\ell) \, (q)$ we get by the Chinese remainder theorem
\begin{align*}
m \equiv \nu \ell q\bar{q} + \beta (\ell/q_\ell) d \bar{d} \,\, (dq),
\end{align*} 
where the inverses $\bar{q}$ and $\bar{d}$ are computed modulo $d$ and $q$, respectively.
Using Lemma \ref{poisson} we get for $H:= M^{\varepsilon} b_2 Dq/M$
\begin{align*}
&\sum_{\substack{m \\ \ell^2+m^2 \equiv 0 \, (d) }} \psi_{M/b}(m-m_0/b) \chi(\ell^2+m^2) \\
& = \sum_{ \substack{ \nu \, (d)  \\ \nu^2+1 \equiv 0 \,(d)}} \sum_{\beta \, (q)}\chi(\beta^2+q_\ell^2)\sum_{\substack{m \\ m \equiv \nu \ell q\bar{q} + \beta (\ell/q_\ell) d \bar{d} \, (dq) }} \psi_{M/b}(m-m_0/b) \\
&=\sum_{ \substack{ \nu \, (d)  \\ \nu^2+1 \equiv 0 \,(d)}} \sum_{\beta \, (q)} \chi(\beta^2+q_\ell^2) \int \sum_{0 \leq |h| \leq H} \psi_{M/b}(t dq -m_0/b)e(ht)e_{d} (-\nu h \ell \bar{q}) e_{q} (-\beta h (\ell/q_\ell) \bar{d})dt\\
& \hspace{360pt} + O_{C,\varepsilon}(M^{-C}).
\end{align*}
Making the change of variables and $\beta \mapsto \beta d$ this becomes
\begin{align*}
\int \sum_{0 \leq |h| \leq H} \bigg(\sum_{\substack{ \beta \, (q)}} \chi(\beta^2 d^2+q_\ell^2) e_q(-\beta h (\ell/q_\ell))\bigg) \sum_{\substack{ \nu^2+1 \equiv 0 \,(d)}}    \psi_{M}(t b d q -m_0)e(ht)e_{d} (-\nu h \ell \bar{q}) dt.
\end{align*}

From $h=0$ we get a total contribution
\begin{align*}
\sum_{\substack{b_1b_2 \ll LM\\ (b_1b_2,q)=1}} \mu(b_1b_2)\sum_{\substack{d \sim D/b_1 \\ (d,q)=1}} \alpha(d)  \varrho(d) \sum_{\substack{\ell \sim L/b \\ (\ell,d)=1}} \lambda_{b\ell} \frac{M}{b_2dq}\int \psi(t) dt \sum_{\substack{ \beta \, (q)}} \chi(\beta^2 d^2+q_\ell^2) 
\pprec  q^{-1/4} M L^{1/k}
\end{align*}
by using the bound (Lemma \ref{charactersumlemma})
\begin{align*}
\sum_{\substack{ \beta \, (q)}} \chi(\beta^2 d^2+q_\ell^2) \ll_\varepsilon (q,q_\ell^2)^{1/2} q^{1/2+\varepsilon}
\end{align*}
 and the fact that $q_\ell = q_0^k \ll q^{\eta k}$ for some small $\eta$.

For $h \neq 0$ we can by symmetry restrict to $h<0$. We first want to remove the cross-condition $\chi(\beta^2 d^2+q_\ell^2)$ between the variables $d$ and $\ell$. To do this we fix the value of $q_\ell$ modulo $q$ and split $\ell$ into congruence classes $q_\ell \equiv \gamma \, (q).$ Hence, we get for some $|c_{h,\ell}(t,q,\beta,\gamma)| \leq 1$ and $|c_{h,\ell}(t,q)| \leq 1$ that the total contribution from $h \neq 0$ is
\begin{align*}
&  \sum_{\gamma \, (q)} \sum_{\substack{b_1b_2 \ll LM\\ (b_1b_2,q)=1}} \mu(b_1b_2)\int \sum_{\beta \, (q)} \sum_{\substack{d \sim D/b_1 \\ (d,q)=1}} \alpha(d) \chi(\beta^2 d^2 +\gamma^2)   \sum_{\nu^2+1 \equiv 0 \, (d)} \\
& \hspace{150pt}\sum_{\substack{
 \ell \sim L /b \\ (\ell,d)=1 \\ q_\ell \equiv \gamma \, (q)}} \lambda_{b\ell}  \sum_{1 \leq h \leq H} c_{h,\ell}(t,q,\beta,\gamma)  e_d(\nu h \ell \bar{q})\psi_{M}(t b d q -m_0) dt \\ 
&  \pprec q^2 \sum_{b_1b_2 \ll LM} \int \sum_{\substack{d \sim D/b_1 \\ (d,q)=1}} |\alpha(d)| \sum_{\nu^2+1 \equiv 0 \, (d)} \bigg|  \sum_{\substack{
 \ell \sim L /b \\ (\ell,d)=1}} \lambda_{b\ell}  \sum_{1 \leq h \leq H} c_{h,\ell}(t,q)  e_d(\nu h \ell \bar{q})\psi_{M}(t bd q -m_0) \bigg|  dt.
\end{align*}
Note that $\psi_{M}(t bd q -m_0)$ vanishes outside $|tbdq - m_0| \, \ll M$. Hence, by $d \sim D/b_1$ and $m_0 \pprec M$ the integral over $t$ is supported on a fixed set $T(b_1,b_2)$ with measure bounded by $\pprec M/b_2qD$ so that by taking the maximal $t$ the last expression is bounded by
\begin{align*}
 \pprec  q \sum_{b_1b_2 \ll LM}\frac{M}{b_2D} \sum_{\substack{d \sim D/b_1 \\ (d,q)=1}}|\alpha(d)| \sum_{\nu^2+1 \equiv 0 \, (d)} \bigg| \sum_{\substack{
 \ell \sim L /b \\ (\ell,d)=1}} \lambda_{b\ell} \sum_{1 \leq h \leq H} c_{h,\ell}  e_d(\nu h \ell \bar{q}) \bigg|
\end{align*}
for some coefficients $c_{h,\ell}=c_{h,\ell}(b_1,b_2,q,m_0)$ independent of $d$ with $|c_{h,\ell}| \leq 1$. Expanding the condition $(\ell,d)=1$ this is bounded by
\begin{align} \label{sumbeforelargesieve}
\frac{qM}{D}\sum_{b_1b_2 \ll LM} \frac{1}{b_2} \sum_{c \ll DL}  \sum_{\substack{d \sim D/b_1c \\ (d,q)=1}}|\alpha(cd)| \sum_{\nu^2+1 \equiv 0 \, (d)} \bigg| \sum_{\substack{
 \ell \sim L /bc }} \lambda_{bc\ell}  \sum_{1 \leq h \leq H} c_{h,c\ell}  e_d(\nu h c \ell  \bar{q}) \bigg|.
\end{align}
By Cauchy-Schwarz and Lemma \ref{largesieve} the sum over $d$ is bounded by (denoting  $H_1 := H/b_2 = M^{\varepsilon} Dq/M$)
\begin{align*}
&\pprec \frac{D^{1/2}}{(b_1c)^{1/2}} \bigg(  \sum_{\substack{d \sim D/b_1c \\ (d,q)=1}} \sum_{\nu^2+1 \equiv 0 \, (d)} \bigg| \sum_{\substack{
 \ell \sim L /bc }} \lambda_{bc\ell}  \sum_{1 \leq h \leq H} c_{h,c\ell}  e_d(\nu h c\ell \bar{q}) \bigg|^2 \bigg)^{1/2}  \\  &\ll \frac{D^{1/2}}{(b_1c)^{1/2}} ( Dq/b_1c + H L/b)^{1/2} \bigg( \sum_{1 \leq j \ll H_1 L/c} \bigg|\sum_{\substack{j = \ell h \\ \ell \sim L/bc}} \lambda_{bc\ell} \bigg|^2 \bigg)^{1/2}  \\
&\ll \frac{1}{bc^{1/2}}( Dq + (DH_1 L)^{1/2} )  \bigg( \sum_{1 \leq j \ll H_1 L/c} \bigg|\sum_{\substack{j = \ell h \\ \ell \sim L/bc}} \lambda_{bc\ell} \bigg|^2 \bigg)^{1/2}.
\end{align*}
 By Cauchy-Schwarz we get (writing $m=bcj=bj'$ and $B:= LM$ so that $1/b =j'/m \ll H_1 L/m$)
\begin{align*}
\sum_{b \ll LM} \frac{\tau(b)}{b} &\sum_{c \ll DL} \frac{1}{c^{1/2}} \bigg( \sum_{1 \leq j \ll H_1 L/c}\bigg|\sum_{\substack{j = \ell h \\ \ell \sim L/bc}} \lambda_{bc\ell}  \bigg|^2 \bigg)^{1/2} \pprec \bigg( \sum_{\substack{j' \ll H_1 L \\ b \ll B}} \frac{1}{b} \tau(j')  \bigg|\sum_{ \substack{j' = \ell h \\ \ell \sim L/b}} \lambda_{b\ell} \bigg|^2 \bigg)^{1/2} \\
\\
&\ll   \bigg( \sum_{m \ll H_1 L B} \frac{H_1 L}{m}\tau(m)^2 \bigg|\sum_{ \substack{m = \ell h \\ \ell \sim L}} \lambda_{\ell} \bigg|^2 \bigg)^{1/2}  
\leq \bigg( H_1 L \sum_{ n^k \sim L} \sum_{h \ll H_1 B} \frac{\tau(h n^k)^4}{h n^k} \bigg)^{1/2}
\\
& \leq  \bigg( H_1 L \sum_{ n^k \sim L} \sum_{h \ll H_1 B} \frac{\tau(h)^4\tau( n)^{4k} }{h n^k} \bigg)^{1/2} \pprec  H_1^{1/2} L^{1/2k}.
\end{align*}
Hence, the final bound for (\ref{sumbeforelargesieve})
is
\begin{align*}
& \pprec \frac{qM}{D} (Dq + (DH_1L)^{1/2})  H_1^{1/2} L^{1/(2k)} \\
& = q M^{\varepsilon}  (M q^{1/2}H_1^{1/2} L^{1/(2k)} + M H_1 L^{1/2+1/(2k)} D^{-1/2} )  \\
& = M^{\varepsilon} q^2 ( D^{1/2}M^{1/2}  L^{1/(2k)} + D^{1/2} L^{1/2+1/(2k)}  )
\end{align*} 
 by using $H_1= M^{\varepsilon} Dq/M$. \qed
 
 \section{A General version of the sieve}  \label{generalsection}
From our argument in Section \ref{sievesection} we can infer the following general result. We have not made an effort to minimize the assumptions or optimize the powers of logarithms. 
 \begin{theorem} \label{generaltheorem}
Let $x$ be large and let $\chi_D$ be a real primitive character associated to a fundamental discriminant $D=x^{o(1)}$ with $D \gg_C \log^C x$. Let $a_n$ and $b_n$ be non-negative sequences supported on $(n,D)=1$, and let $g(d)$ be the associated multiplicative function. Suppose that $g(d) \ll \tau(d)^{O(1)}/d.$ Assume that $g$ satisfies the assumptions of Lemma \ref{linearlemma} and assume that Proposition \ref{exceptionalprop} holds. Suppose that for any $z > x^\varepsilon$ we have
 \begin{align*}
 \sum_{n \sim x} b_n \Lambda(n) = (1+o(1)) \frac{1}{e^{\gamma_1} \log z} \prod_{p \leq z} (1-g(p)) \sum_{n \sim x} b_n
 \end{align*}
 and
 \begin{align*}
 \sum_{k \sim z} \Lambda(k) g(k) = (1+o(1)) \sum_{k \sim z} \frac{\Lambda(k)}{k}.
 \end{align*}
 Suppose also that for some $\epsilon>0$ we have the crude bounds
 \begin{align*}
 \sum_{n \sim x} a_n \Lambda(n) 1_{(n,P(x^\epsilon))>1} ,  \quad \sum_{n \sim x} b_n \Lambda(n) 1_{(n,P(x^\epsilon))>1} = o (\sum_{\substack{n \sim x}} \Lambda(n) b_n ).
\end{align*}  
 Suppose that the exponent of distribution is at least $\alpha=2/3-\gamma$ for some $\gamma < 1/6$ (in the sense of Propositions \ref{typei1prop} and \ref{typei2prop}). 
Then
 \begin{align*}
\sum_{\substack{n \sim x}} \Lambda(n) a_n \geq \bigg(1- 2 \log \frac{1 + 3\gamma}{1-6\gamma}-O(L(1,\chi_D)\log^5 x)-o(1) \bigg)\sum_{\substack{n \sim x}} \Lambda(n) b_n.
\end{align*} 
Assuming that the exponent of distribution is at least $1/2+\eps$ we have
 \begin{align*}
\sum_{\substack{n \sim x}} \Lambda(n) a_n \leq (1+O(L(1,\chi_D)\log^5 x)+o(1))\sum_{\substack{n \sim x}} \Lambda(n) b_n.
\end{align*} 
In particular, if $L(1,\chi_{D}) \leq \log^{-100} D$ and $\exp(\log^{10} D) < x <\exp(\log^{16} D)$, then the lower bound is non-trivial as soon as the exponent of distribution satisfies 
\begin{align*}
\alpha > \frac{1+\sqrt{e}}{1+2 \sqrt{e}}= 0.61634\dots
\end{align*}
 \end{theorem}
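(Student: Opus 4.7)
The plan is to replay the argument of Section \ref{sievesection} in the abstract setting, tracking precisely which hypothesis is used at each step. First I would run the initial decomposition of Section \ref{initialsection} verbatim: choose $z = x^{1/(\log\log x)^2}$, apply the crude sieving hypothesis to restrict to $(n,P(z))=1$, and use $\lambda'=\lambda\ast\Lambda$ to write $\sum_{n\sim x}a_n\Lambda(n) = S_1 - S_2 + o\bigl(\sum b_n\Lambda(n)\bigr)$. The main term $S_1$ is evaluated as in Section \ref{s1section}: expand $\lambda'(n) = \sum_{mn'=n}\chi(m)\log n'$, split at $n'>x^{1/2}$ versus $m>x^{1/2}$, insert a finer-than-dyadic partition to linearise the $\log n'$ weight, and invoke the Type I and Type I$_\chi$ hypotheses (needing only exponent $1/2+\varepsilon$) to swap $a$ for $b$. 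Since $\lambda'\geq\Lambda$, reversing the expansion yields $S_1\geq(1+o(1))\sum b_n\Lambda(n)$.

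Next I would bound $S_2 = S_{21}+S_{22}+S_{23}$ using the general gap $\gamma=2/3-\alpha$. The flanking pieces $S_{21}$ and $S_{23}$ each contain a variable of size at least $x^{1/3+\gamma}$, so the exponent of distribution $2/3-\gamma$ combined with Proposition \ref{exceptionalprop} extracts a factor of $L(1,\chi_D)$ and gives $O\bigl(L(1,\chi_D)\log^5 x\bigr)\sum b_n\Lambda(n)$. The middle range $S_{22}$ splits into $S_{221}+S_{222}$ depending on whether the soft variable $m$ is prime. The composite sub-sum $S_{221}$ is handled via the factorisation $m=m_1m_2$ with $m_1,m_2\geq z$, which forces one of the merged variables to be $\leq x^{1/2}$, so that Type I plus Proposition \ref{exceptionalprop} again extracts $L(1,\chi_D)$; here the hypothesis $g(d)\ll\tau(d)^{O(1)}/d$ together with Lemma \ref{divisorroughlemma} handles the divisor-bounded weights.

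The heart of the argument is the prime part $S_{222}$. I would apply the linear sieve upper bound (Lemma \ref{linearlemma}) to the non-negative sequence $c_n = a_{kn}(1+\chi(n))$ with level $x^{2/3-\gamma}/k$; this level lies within reach of the Type I and Type I$_\chi$ hypotheses precisely when $\gamma<1/6$. The Mertens product in the linear sieve main term is converted into $\sum b_n\Lambda(n)$ via the hypothesis $\sum_{n\sim x}b_n\Lambda(n) \sim e^{-\gamma_1}(\log z)^{-1}\prod_{p\leq z}(1-g(p))\sum b_n$, while the outer $k$-sum collapses, by the Prime number theorem and the hypothesis $\sum_{k\sim z}\Lambda(k)g(k) = (1+o(1))\sum_{k\sim z}\Lambda(k)/k$, to
\[
2\int_{1/3-2\gamma}^{1/3+\gamma}\frac{d\alpha'}{2/3-\gamma-\alpha'} \;=\; 2\log\frac{1+3\gamma}{1-6\gamma} \;=:\; D(\gamma).
\]
This yields $S_{222}\leq(D(\gamma)+o(1))\sum b_n\Lambda(n)$, and combining all pieces proves the lower bound. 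The upper bound is easier: by $S_2\geq 0$ it suffices to evaluate $S_1$ on the $b$-side, which after reversing the decomposition reduces to the $S_{21},S_{23}$ analysis with $\gamma=0$, requiring only the exponent $1/2+\varepsilon$.

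Finally, solving $D(\gamma)<1$ amounts to $(1+3\gamma)/(1-6\gamma)<\sqrt{e}$, equivalently $\gamma < (\sqrt{e}-1)/(3(1+2\sqrt{e}))$, which rearranges to $\alpha = 2/3-\gamma > (1+\sqrt{e})/(1+2\sqrt{e})$. Under $L(1,\chi_D)\leq\log^{-100}D$ and $x\leq\exp(\log^{16}D)$ the error $L(1,\chi_D)\log^5 x$ is $o(1)$, so the $L$-dependent losses are negligible and the threshold is genuinely $(1+\sqrt{e})/(1+2\sqrt{e})$. The only real obstacle is the bookkeeping in $S_{222}$: one must verify that Lemma \ref{linearlemma} can be applied uniformly in $k$ and that the Type I$_\chi$ remainder survives summation over $k \leq x^{1/3+\gamma}$, which is precisely where the bound $\gamma<1/6$ on the sieve level enters.
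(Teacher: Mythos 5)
Your proposal is correct and follows the paper's intended proof exactly: the paper itself offers no separate argument for Theorem \ref{generaltheorem} beyond the remark that it is inferred from the argument of Section \ref{sievesection}, and your abstraction tracks each hypothesis to precisely the step where Section \ref{sievesection} invokes it (the crude sieve bound for dropping small primes, Type I and Type I$_\chi$ at level $2/3-\gamma$ for $S_1$, $S_{21}$, $S_{23}$, $S_{221}$ and the linear sieve remainder, Proposition \ref{exceptionalprop} to extract $L(1,\chi_D)$, the $g$-normalisation hypotheses to convert the linear sieve main term into $\sum b_n\Lambda(n)$, and the restriction $\gamma<1/6$ to keep the $S_{22}$ range non-degenerate and the sieve level $D_k=x^{2/3-\gamma}/k\geq 2$). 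Your algebra giving $\alpha>(1+\sqrt{e})/(1+2\sqrt{e})$ from $2\log\frac{1+3\gamma}{1-6\gamma}<1$ and your observation that $L(1,\chi_D)\log^5 x=o(1)$ under the stated constraints are both correct.
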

 \begin{remark}With much more effort it is possible to get the same result as above with $L(1,\chi)\log x$  in place of $L(1,\chi)\log^5 x$, so that one only needs $L(1,\chi_D)=o(1/\log D)$.
 \end{remark}
 \begin{remark} Unfortunately the above theorem just misses out the next case  $a^2+b^{10}$, which has an exponent of distribution $3/5-\varepsilon.$ Similarly as with the linear sieve, further improvements are possible if we make use of well-factorability of the weights \cite[Chapter 12.7]{odc}. For example, the upper bound for the sum $S_{222}$ can be improved if we are able to handle certain Type I/II sums (that is, Type I sums where the modulus is $kd$ with $d$ well-factorable). Note also that in $S_{21}$ and $S_{23}$ the weight factorizes and furthermore there is some smoothness available in the weight. Hence, assuming suitable arithmetic information (of Type I/II or Type I$_2$) we could handle some parts near the edges of $S_{22}$ by a similar argument as for the sums $S_{21}$ or $S_{23}$. Unfortunately we do not know how to carry this out for the sequence $a^2+b^{10}$, but possibly sums of Kloosterman sums methods might be able to handle these sums. It is also unclear if the handling of the sum $S_{222}$ is optimal but we have not found a way to improve this.
 \end{remark}
 
\begin{remark}
The ideas in this paper can be used also to the problem of primes in short intervals, to improve the result of Friedlander and Iwaniec \cite{fishort} which gives primes in intervals of length $x^{39/79} < x^{1/2}$ under the assumption of exceptional characters. The sieve argument is slightly different here since for this problem we can also utilize  the available Type I/II  and Type I$_2$ information furnished by the exponential sum estimates used for the problem of largest prime factor on short intervals \cite{bhshort,
fwshort,lwshort}. The details will appear elsewhere.
\end{remark} 
 
 \bibliography{charactersievebib}
\bibliographystyle{abbrv}
  \end{document}